\theoremstyle{plain}
\newtheorem{theorem}{Theorem}[section]
\newtheorem*{theorem*}{Theorem}
\newtheorem{proposition}{Proposition}[theorem]
\newtheorem{lemma}[theorem]{Lemma}
\newtheorem{corollary}[theorem]{Corollary}
\theoremstyle{definition}
\newtheorem{definition}[theorem]{Definition}
\newtheorem*{definition*}{Definition}
\numberwithin{equation}{section}
\newcommand{\C}{\mathbb{C}}
\newcommand{\D}{\mathbb{D}}
\newcommand{\N}{\mathbb{N}}
\newcommand{\Q}{\mathbb{Q}}
\newcommand{\Lip}{\mathcal{L}}
\newcommand{\LipW}{\Lip_{\textbf{w}}}
\newcommand{\Lipk}{\Lip^{(k)}}
\newcommand{\Lipko}{\Lip^{(k)}_0}
\renewcommand{\mod}[1]{\left|#1\right|}
\newcommand{\knorm}[1]{\left\|#1\right\|_{k}}
\newcommand{\Bloch}{\mathcal{B}}
\newcommand{\logBloch}{\mathcal{LB}}
\renewcommand{\a}{\alpha}
\renewcommand{\c}{\chi}
\newcommand{\e}{\varepsilon}
\newcommand{\f}{\varphi}
\newcommand{\g}{\gamma}
\renewcommand{\l}{\lambda}
\newcommand{\m}{\mu}
\renewcommand{\o}{\omega}
\newcommand{\s}{\sigma}
\renewcommand{\O}{\Omega}
\newcommand{\ben}{\begin{eqnarray}}
\newcommand{\eeqn}{\end{eqnarray}}
\author{Robert F.~Allen\textsuperscript{1}, Flavia Colonna\textsuperscript{2}, and Glenn R. Easley\textsuperscript{3}}
\address{\textsuperscript{1}Department of Mathematics and Statistics, University of Wisconsin-La Crosse}
\address{\textsuperscript{2}Department of Mathematical Sciences, George Mason University, Fairfax, VA 22030, USA}
\address{\textsuperscript{3}System Planning Corporation, Arlington, VA 22209, USA}
\email{rallen@@uwlax.edu, fcolonna@gmu.edu, geasley@sysplan.com}
\subjclass[2010]{primary: 47B38, 05C05}
\keywords{Multiplication operators, trees, Lipschitz space}
\date{}
\title[Mult. Operators on Iterated Logarithmic Lipschitz Spaces]{Multiplication operators on the iterated logarithmic Lipschitz spaces\\ of a tree}
\begin{document}

\begin{abstract}
We introduce a class of iterated logarithmic Lipschitz spaces $\Lip^{(k)}$, $k\in\N$, on an infinite tree which arise naturally in the context of operator theory. We characterize boundedness and compactness of the multiplication operators on $\Lip^{(k)}$ and provide estimates on their operator norm and their essential norm. In addition, we determine the spectrum, characterize the multiplication operators that are bounded below, and prove that on such spaces there are no nontrivial isometric multiplication operators and no isometric zero divisors.
\end{abstract}

\maketitle

\section{Introduction}
Let $X$ and $Y$ be Banach spaces of complex-valued functions defined on a set $\O$, and, given a function $\psi$ defined on $\O$, define the \emph{multiplication operator with symbol $\psi$} as the linear operator $M_\psi f = \psi f,$ for all $f \in X$. One of the objectives in the study of this operator is to relate the properties of the operator to the function-theoretic properties of the symbol.  For various spaces $X$ and $Y$, much research has been done on the multiplication operator, the composition operator with symbol $\varphi$, defined as $C_\varphi f = f\circ\varphi$, and more generally the weighted composition operator $W_{\psi,\varphi} f = M_\psi C_\varphi$.  The interested reader is referred to \cite{CowenMacCluer:95} and \cite{Zhu:07}, which are valuable resources on operator theory on function spaces.

An important space of analytic functions for the study of these types of operators is the \textit{Bloch space} $\mathcal{B}$, defined to be the set of functions analytic on $\D$, known as \textit{Bloch functions}, satisfying $$\sup_{z \in \D} (1-\mod{z}^2)\mod{f'(z)} < \infty.$$ For an in-depth treatise on Bloch functions, we recommend \cite{ACP}.

Arazy \cite{Arazy:82} and Brown and Shields \cite{BrownShields:91} independently characterized the bounded multiplication operators $M_\psi$ on the Bloch space. The symbol of such operators must be bounded on $\D$ and satisfy the condition \ben \sup_{z \in \D} (1-\mod{z}^2)|\psi'(z)|\log\frac{2}{1-\mod{z}} < \infty.\label{logB}\eeqn
The set $\Bloch_{\log}$ of analytic functions $\psi$ satisfying (\ref{logB}) is a Banach space
known as the {\textit{logarithmic Bloch space}}. The space $\Bloch_{\log}$ also arises in the study of Hankel operators on the Bergman one space. The Bergman space $A^p$ on $\D$ is defined to be the set of analytic functions $f$ on $\D$ such that $$\int_{\D} \mod{f(z)}^p\;dA(z) < \infty,$$ where $A$ denotes Lebesgue area measure. The Hankel operator $H_f$ on $A^1$ is defined as $H_f g = (I-P)(\overline{f}g)$, where $I$ is the identity operator and $P$ is the standard Bergman projection from $L^1$ into $A^1$. In \cite{Attele:92}, Attele showed that the Hankel operator $H_f$ is bounded on $A^1$ if and only if $f \in \Bloch_{\log}$.

The study of operators on the logarithmic Bloch space began with the characterizations of the bounded and the compact composition operators given in \cite{Yoneda:02} by Yoneda.
In \cite{Galanopoulos:07}, Galanopoulos generalized these results by characterizing the bounded and the compact weighted composition operators on $\Bloch_{\log}$.
In \cite{HosokawaDieu:09}, Hosokawa and Dieu extended the notion of the logarithmic Bloch space by defining a family of weights on the logarithmic Bloch space, thereby obtaining a sequence $\logBloch^n$ of Banach spaces such that $\logBloch^1=\Bloch_{\log}$ and such that the symbols of the weighted composition operators on $\logBloch^n$ are precisely the bounded analytic functions in $\logBloch^{n+1}$.

Recently, the authors have begun the study of multiplication operators on the discrete environment of an infinite rooted tree, which, due to its potential theoretic properties, is widely regarded as a discrete analogue of the hyperbolic disk. In \cite{ColonnaEasley:10}, the last two authors introduced the Lipschitz space $\Lip$ on a tree, namely the set of Lipschitz functions from the tree with the edge counting metric to the Euclidean space $\C$  and showed that it can be endowed with a Banach space structure.

 The Lipschitz space is a natural discrete analogue of the Bloch space $\Bloch$, since $f\in\Bloch$ if and only if $f$ is Lipschitz as a function from $\D$ under the hyperbolic metric to the Euclidean space $\C$ (see \cite{C}, Theorem~10, or \cite{Zhu:07}, Theorem~5.6).

The characterization of the bounded multiplication operators obtained in  \cite{ColonnaEasley:10} led us to introduce in \cite{AllenColonnaEasley:10} the weighted Lipschitz space $\LipW$
 on a tree which can also be given a Banach space structure.
 As was the case with the logarithmic Bloch space, the bounded functions in the weighted Lipschitz space are precisely those that induce bounded multiplication operators on $\Lip$.
Furthermore, in the above papers, the notions of little Lipschitz space and of little weighted Lipschitz space were given. These spaces were shown to be closed separable subspaces of $\Lip$ and $\LipW$ and the bounded functions in these subspaces are precisely those that induce compact multiplication operators on the respective parent spaces.
The multiplication operators on $\Lip$ and $\LipW$ and their interplay, as well as the operators between these spaces and the space of bounded functions $L^\infty$, have been  investigated in \cite{ColonnaEasley:10}, \cite{AllenColonnaEasley:10}, \cite{ColonnaEasley:10-II} and \cite{AllenColonnaEasley:10_I}.

	In the present work, inspired by the works of Yoneda \cite{Yoneda:02}, Galanopoulos \cite{Galanopoulos:07}, and Hokosawa and Dieu \cite{HosokawaDieu:09} in the classical setting of the unit disk, we expand the focus of our study by introducing the iterated logarithmic Lipschitz spaces on a tree and extending to the multiplication operators on these spaces the results of our earlier works.

\subsection{Organization of the paper} After concluding this section by giving the preliminary definitions and notation on trees and compiling the main results on the Lipschitz and weighted Lipschitz spaces in our earlier works, in Section~\ref{Section:ILS}, we define the sequence of iterated logarithmic Lipschitz spaces $\{\Lip^{(k)}\}_{k\in\N}$ and prove that it can be endowed with a Banach space structure.
For $k\in\N$, we also define the iterated logarithmic little Lipschitz space $\Lipko$ and prove that it is a closed separable subspace of $\Lipk$.

In Section~\ref{Bdd}, we characterize the bounded multiplication operators on $\Lipk$ and $\Lipko$ and give operator norm estimates.

In Section~\ref{Section:Spectrum}, we show that the point spectrum is nonempty and, in fact, it is a dense subset of the spectrum. We also show that the spectrum and the approximate point spectrum are equal to the closure of the range of the symbol.  We deduce a characterization of the bounded multiplications operators that are bounded below.

In Section~\ref{isometries}, we show that among the multiplication operators on $\Lipk$ there are no nontrivial isometries and no isometric zero divisors, thereby extending the results obtained in \cite{AllenColonnaEasley:10} in the case $k=1$.

In Section~\ref{Section:Compact}, we provide a characterization of the compact multiplication operators on $\Lipk$ and $\Lipko$. Finally, in Section~\ref{ess_norm}, we give essential norm estimates.

\subsection{Preliminaries and earlier results}
By a \emph{tree} $T$ we mean a locally finite, connected, and simply-connected graph, which, as a set, we identify with the collection of its vertices.  By a \emph{function on a tree} we mean a complex-valued function on the set of its vertices.

Two vertices $v$ and $w$ are called \emph{neighbors} if there is an edge $[v,w]$
connecting them, and we use the notation $v\sim w$. A vertex is called \emph{terminal} if it has a unique neighbor. A \emph{path} is a finite or infinite sequence of vertices $[v_0,v_1,\dots]$ such that $v_k\sim v_{k+1}$ and  $v_{k-1}\ne v_{k+1}$, for all $k$. Given a tree $T$ rooted at $o$ and a vertex $v\in T$, a vertex $w$ is called \emph{descendant} of $v$  if $v$ lies in the unique path from $o$ to $w$. The vertex $v$ is then called an \emph{ancestor} of $w$.  The vertex $v$ is called a \emph{child} of $v^-$.

For $v\in T$, the set $S_v$ consisting of $v$ and all its descendants is called the \emph{sector} determined by $v$. Define the \emph{length} of a finite path
$[v=v_0,v_1,\dots,w=v_n]$ (with $v_k\sim v_{k+1}$ for $k=0,\dots, n-1$) to be the number $n$ of edges connecting $v$ to $w$. The \emph{distance}, $d(v,w)$, between vertices $v$ and $w$ is the length of the unique path connecting $v$ to $w$. Fixing $o$ as the root of the tree, we define the \emph{length} of a vertex $v$, by $|v|=d(o,v)$.

In this paper, we shall assume the tree $T$ to be without terminal vertices (and hence infinite), and rooted at a vertex $o$.  We denote by $L^\infty$ the space of the bounded functions $f$ on the tree equipped with the supremum norm $\|f\|_\infty=\sup\limits_{v\in T}|f(v)|.$

The Lipschitz space $\Lip$ on a tree T introduced in \cite{ColonnaEasley:10} can be more simply described as the space of functions $f$ on $T$ whose ``derivative" defined by $Df(v) = \mod{f(v)-f(v^-)}$ is bounded on $T^* = T\setminus\{o\}$. The norm $\|f\|_\Lip = \mod{f(o)} + \|Df\|_\infty$ renders $\Lip$ a Banach space. The bounded and compact multiplication operators on $\Lip$ were characterized as follows.

\begin{theorem} [\cite{ColonnaEasley:10}, Theorems 3.6 and 7.2] \label{CE} For a function $\psi$  on a tree $T$,
\begin{enumerate}
\item $M_\psi$ is bounded on $\Lip$ if and only if $\psi \in L^\infty$ and $\displaystyle\sup_{v \in T^*} \mod{v}D\psi(v) < \infty.$
\item $M_\psi$ is compact on $\Lip$ if and only if $\displaystyle\lim_{\mod{v}\to\infty} \psi(v) = 0$ and $\displaystyle\lim_{\mod{v}\to\infty} \mod{v}D\psi(v) = 0.$
\end{enumerate}
\end{theorem}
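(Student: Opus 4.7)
The plan is to handle the two equivalences separately, relying on the product-like identity
$$\psi(v)f(v)-\psi(v^-)f(v^-) = \psi(v)(f(v)-f(v^-)) + f(v^-)(\psi(v)-\psi(v^-)),$$
together with the telescoping growth bound $|f(v^-)|\le |v|\,\|f\|_{\Lip}$ that is immediate from $Df$ being bounded. For the sufficiency in (1), this identity directly gives $D(\psi f)(v)\le \|\psi\|_\infty Df(v) + |f(v^-)|D\psi(v)$, so the two hypotheses $\psi\in L^\infty$ and $\sup_{v\in T^*}|v|D\psi(v)<\infty$ combine with the growth bound to yield $\|M_\psi f\|_{\Lip}\le C\|f\|_{\Lip}$. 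For the necessity in (1), I would insert three test functions of uniformly bounded norm: (i) the constant $\mathbb{1}$ forces $\psi\in\Lip$; (ii) the characteristic function $\chi_{S_v}$ (of a sector) has $\|\chi_{S_v}\|_{\Lip}=1$ and yields $D(\psi\chi_{S_v})(v)=|\psi(v)|$, giving $\|\psi\|_\infty\le \|M_\psi\|$; (iii) the truncated length $h_v(w)=\min(|w|,|v|)$ has $\|h_v\|_{\Lip}=1$ and produces $D(\psi h_v)(v)=||v|D\psi(v)+\psi(v^-)|$, so $|v|D\psi(v)\le \|M_\psi\|+\|\psi\|_\infty$.

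For (2), the sufficiency would be proved by the standard compactness recipe for function spaces: take a bounded sequence $(f_n)\subset \Lip$, pass via a diagonal argument (evaluation at each vertex is a continuous functional since $|f(v)|\le (1+|v|)\|f\|_{\Lip}$) to a subsequence converging pointwise to some $f\in\Lip$. Splitting $T^*$ into the finite ball $\{|v|\le N\}$ and its complement, the hypotheses $\psi(v),|v|D\psi(v)\to 0$ let me bound $\sup_{|v|>N} D(\psi(f_{n_k}-f))(v)$ by $\varepsilon(\|\psi\|_\infty+\sup_v|v|D\psi(v))\|f_{n_k}-f\|_{\Lip}$ using the same identity and growth bound as above, while on the finite ball pointwise convergence plus the continuity of point evaluation yields norm convergence. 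This produces a norm-convergent subsequence, proving compactness.

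The necessity in (2) is where the main difficulty lies, because compactness must be leveraged through test functions that are simultaneously bounded in $\Lip$ and converge pointwise to $0$. The sector characteristics $\chi_{S_{v_n}}$ (for $|v_n|\to\infty$) do converge pointwise to $0$ and give $\|M_\psi\chi_{S_{v_n}}\|_{\Lip}\ge |\psi(v_n)|$, so compactness yields $\psi(v)\to 0$. The subtle step is extracting $|v|D\psi(v)\to 0$: the natural candidate $h_v$ does not vanish pointwise, so I would truncate it by taking $n=\lfloor|v|/2\rfloor$ and setting $f_v(w)=((|w|-n)^+)\wedge(|v|-n)$. This still has $\|f_v\|_{\Lip}=1$ and satisfies $f_v(w)=0$ once $|v|>2|w|$, so $f_v\to 0$ pointwise along any sequence $|v_n|\to\infty$, while the identity gives $D(\psi f_v)(v)\ge (|v|-n)D\psi(v)-\|\psi\|_\infty$, hence $(|v|/2)D\psi(v)\le \|M_\psi f_v\|_{\Lip}+\|\psi\|_\infty$. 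Applying the weak-to-norm continuity coming from compactness of $M_\psi$ (a bounded, pointwise-null sequence in $\Lip$ is mapped to a norm-null sequence) to $(f_{v_n})$ and using $\psi(v_n)\to 0$ from the first step then forces $|v_n|D\psi(v_n)\to 0$. The main obstacle I anticipate is precisely this construction of a single test family that reconciles the two competing demands—pointwise decay and largeness of $f_v(v)$—since without such a family the compactness hypothesis cannot be translated into a quantitative decay statement on $|v|D\psi(v)$.
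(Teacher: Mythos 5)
This theorem is quoted from \cite{ColonnaEasley:10} and the present paper contains no proof of it, so there is nothing internal to compare against; I can only measure your argument against the proofs the paper gives for the analogous statements on $\Lipk$ (Theorem~\ref{boundedness}, Lemma~\ref{compact:chara}, Theorem~\ref{chara_compactness}). Judged that way, your proposal is correct and follows the same template: the product-rule splitting of $D(\psi f)$, the growth bound $|f(v)|\le(1+|v|)\|f\|_\Lip$, the sector characteristics $\chi_{S_v}$ to extract $\|\psi\|_\infty\le\|M_\psi\|$, a radial test function to extract the $|v|D\psi(v)$ condition, and the ``bounded plus pointwise-null implies image norm-null'' compactness criterion. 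Your one genuine departure is the truncated ramp $f_v(w)=\min\{\max(|w|-n,0),\,|v|-n\}$ with $n=\lfloor|v|/2\rfloor$, which does double duty for boundedness and compactness; the paper's analogues instead use the unbounded function $\ell_k(|\cdot|)$ (here it would be $|\cdot|$) for boundedness and a separate, more elaborate family for compactness, so your construction is slightly more economical. One display needs repair: as written, $(|v|/2)D\psi(v)\le\|M_\psi f_v\|_\Lip+\|\psi\|_\infty$ only yields $\limsup|v_n|D\psi(v_n)\le 2\|\psi\|_\infty$, which is vacuous. The identity actually gives the error term $|\psi(v^-)|\,Df_v(v)=|\psi(v^-)|$, so the correct inequality is $(|v|-n)D\psi(v)\le\|M_\psi f_v\|_\Lip+|\psi(v^-)|$, and then $\psi(v_n^-)\to0$ from your first step closes the argument --- you clearly intend exactly this when you say ``using $\psi(v_n)\to0$,'' so it is a write-up imprecision rather than a gap, but the $\|\psi\|_\infty$ in the displayed bound should be replaced by $|\psi(v^-)|$.
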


In \cite{OhnoZhao:01}, Ohno and Zhao characterized the compact weighted composition operators on the Bloch space. As a consequence they deduced that the only compact multiplication operator on the Bloch space is the operator whose symbol is identically $0$. The same holds true for the iterated logarithmic Bloch spaces \cite{HosokawaDieu:09}. By contrast, the above result shows that the Lipschitz space is richer for the study of multiplication operators, since the set of compact multiplication operators on $\Lip$ contains, not only the operators whose symbol is a function on $T$ with finite support, but also nonvanishing functions such as $\psi(v) = \frac{1}{|v|^p}$ for $v \neq o$, where $p \geq \frac{1}{2}$ and $\psi(o)$ is defined arbitrarily.

In \cite{AllenColonnaEasley:10}, we introduced the weighted Lipschitz space $\LipW$ on a tree T to be the space of functions on $T$ for which $\sup_{v \in T^*} |v|Df(v) < \infty$ and proved that $\LipW$ is a Banach space under the norm
$\|f\|_{\textbf{w}}=|f(0)|+\sup_{v \in T^*} \mod{v}D\psi(v).$
Furthermore, we defined the \textit{little weighted Lipschitz space} to be the set $\Lip_{\textbf{w},0}$ of functions $f\in\LipW$ such that
$\lim\limits_{|v|\to\infty}\mod{v}D\psi(v)=0,$
and proved that $\Lip_{\textbf{w},0}$ is a closed separable subspace of $\LipW$.

 The study of the multiplication operators on $\LipW$ led to the following result.

\begin{theorem}[\cite{AllenColonnaEasley:10}, Theorems 4.1 and 7.2]\label{ACE} Given a function $\psi$ on a tree $T$,\smallskip

\noindent \hskip3pt {\rm{(1)}}\ $M_\psi$ is bounded on $\Lip_{\rm{\bf{w}}}$ if and only if $\psi \in L^\infty$ and $\sup\limits_{v \in T^*} |v|\log|v|D\psi(v) < \infty.$

\noindent \hskip3pt {\rm{(2)}}  $M_\psi$ is compact on $\Lip_{\rm{\bf{w}}}$ if and only if\hskip-2pt $\lim\limits_{|v|\to\infty}\hskip-2pt \psi(v)\hskip-2pt~=~\hskip-2pt 0$ and $\lim\limits_{|v|\to\infty}\hskip-3pt |v|\log|v|D\psi(v)\hskip-3pt~=~\hskip-2pt0.$
\end{theorem}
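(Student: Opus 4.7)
Both parts of the theorem rest on two basic computations. The first is the product-rule identity
$$\psi(v)f(v) - \psi(v^-)f(v^-) = \psi(v)\bigl(f(v)-f(v^-)\bigr) + \bigl(\psi(v)-\psi(v^-)\bigr)f(v^-),$$
whose triangle and reverse-triangle inequalities yield upper and lower bounds for $D(M_\psi f)$. The second is the growth estimate $|f(v)| \leq |f(o)| + \|f\|_{\textbf{w}}\sum_{k=1}^{|v|} 1/k \leq (2+\log|v|)\|f\|_{\textbf{w}}$ for $f\in\LipW$, obtained by telescoping along the geodesic from $o$ to $v$ and using $Df(v_k)\leq \|f\|_{\textbf{w}}/k$. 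The $\log|v|$ factor appearing in the weighted derivative condition is precisely this logarithmic growth rate of functions in $\LipW$.

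For (1) sufficiency, assuming $\psi\in L^\infty$ and $C := \sup_{v\in T^*} |v|\log|v|\,D\psi(v) < \infty$, the two tools combine to give
$$|v|\,D(M_\psi f)(v) \leq \|\psi\|_\infty\,|v|Df(v) + |v|(2+\log|v^-|)\,D\psi(v)\,\|f\|_{\textbf{w}},$$
whose supremum is $\lesssim (\|\psi\|_\infty + C)\|f\|_{\textbf{w}}$. For (1) necessity, I would use two families of test functions. First, the indicator $g_w = \mathbb{1}_{\{w\}}$ satisfies $\|g_w\|_{\textbf{w}} = |w|+1$, since $Dg_w$ equals $1$ at $w$ and at each child of $w$ and vanishes elsewhere. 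Because $M_\psi g_w = \psi(w)g_w$, this immediately yields $|\psi(w)| \leq \|M_\psi\|$, whence $\psi\in L^\infty$. Second, testing with $h(v) = \log(e+|v|)\in \LipW$, the reverse triangle inequality applied to the product rule gives
$$D(M_\psi h)(v) \geq D\psi(v)\log(e+|v|) - \|\psi\|_\infty\, Dh(v),$$
so that $|v|\log|v|\,D\psi(v) \leq \|M_\psi h\|_{\textbf{w}} + 2\|\psi\|_\infty \lesssim \|M_\psi\|$.

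For (2) sufficiency, I would approximate $M_\psi$ in operator norm by the finite-rank truncations $M_{\psi_N}$ where $\psi_N = \psi\cdot\mathbb{1}_{\{|v|\leq N\}}$; the two decay hypotheses, fed into the (1)-style estimate applied to $\psi-\psi_N$, force $\|M_\psi - M_{\psi_N}\|\to 0$. For (2) necessity, I would use the standard criterion that a compact operator on $\LipW$ sends bounded pointwise-null sequences to norm-null sequences. The renormalized indicators $\widetilde g_{w_n} := g_{w_n}/(|w_n|+1)$ have unit norm and vanish pointwise as $|w_n|\to\infty$, yielding $\|M_\psi\widetilde g_{w_n}\|_{\textbf{w}} = |\psi(w_n)| \to 0$. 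A localized, renormalized version of $h$ concentrated near $w_n$ then isolates the derivative term and delivers $|w_n|\log|w_n|\,D\psi(w_n)\to 0$. The step I expect to be the main obstacle is engineering this second compact-necessity test sequence: it must be weakly null, uniformly norm-bounded in $\LipW$, and still produce a lower bound of order $|w_n|\log|w_n|\,D\psi(w_n)$ on $\|M_\psi f_n\|_{\textbf{w}}$ without letting the $\|\psi\|_\infty Dh$ cross term absorb the main contribution.
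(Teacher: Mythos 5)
Your part (1) is essentially sound and runs parallel to the paper's argument (the $k=1$ case of Theorem~\ref{boundedness}): the same product-rule decomposition, the same logarithmic growth bound (Proposition~\ref{modulus_est}), and a test function $\log(e+|v|)$ playing the role of the paper's $\ell_1(|v|)=1+\ln|v|$; your use of the indicators $\mathbb{1}_{\{w\}}$ to get $\|\psi\|_\infty\le\|M_\psi\|$ is a concrete substitute for the paper's appeal to the Duren--Romberg--Shields lemma on functional Banach spaces, and both work.

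Part (2) contains a genuine error in the sufficiency direction. The sharp symbol truncation $\psi_N=\psi\cdot\mathbb{1}_{\{|v|\le N\}}$ does \emph{not} satisfy $\|M_\psi-M_{\psi_N}\|\to 0$: the cutoff creates a jump at the layer $|v|=N+1$, where $D(\psi-\psi_N)(v)=|\psi(v)|$, so already testing against the constant function $1$ (which has $\|1\|_{\textbf{w}}=1$) gives $\|M_{\psi-\psi_N}\|\ge (N+1)\sup_{|v|=N+1}|\psi(v)|$. For $\psi(v)=|v|^{-1/2}$ --- which satisfies both decay hypotheses and hence induces a compact $M_\psi$ --- this lower bound is $\sqrt{N+1}\to\infty$, so your approximants diverge from $M_\psi$ in operator norm. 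The "(1)-style estimate applied to $\psi-\psi_N$" cannot rescue this, because its derivative term at the boundary layer is $(N+1)\log(N+1)|\psi(v)|$, which is exactly the obstruction. The paper avoids this by never truncating the symbol: compactness sufficiency is proved via the sequential criterion (Lemma~\ref{compact:chara}, a bounded pointwise-null sequence $f_n$ has $\|\psi f_n\|_k\to 0$), and where finite-rank approximation is genuinely needed (the essential norm upper bound, Theorem~\ref{estimate}) the compact operators $K_n$ truncate the \emph{argument} $f$ by freezing its values beyond level $n$ at the ancestor of length $n$, which introduces no spurious derivative. Separately, in the necessity direction of (2) you have correctly located the hard point --- building a weakly null, norm-bounded sequence that isolates $|w_n|\log|w_n|D\psi(w_n)$ --- but you have not supplied it; the paper's construction (the functions $g_n$ built from $[\ell_k(|v|)]^2/\ell_k(|v_n|)$ capped at $\ell_k(|v_n|)$, together with the boundedness of $\{\|g_n\|_k\}$ via Lemma~\ref{noteasy}) is precisely the missing ingredient, so this step remains a gap in your proposal as written.
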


The logarithmic term present in the above characterizations of boundedness and compactness of $M_\psi$ suggested the existence of a decreasing chain of spaces and corresponding subspaces such that the elements of a space (respectively, subspace) of a given generation could be used to characterize the bounded (respectively, compact) multiplication operators on the space relative to the parent generation.

\section{The iterated logarithmic Lipschitz spaces}\label{Section:ILS}

Let $T$ be a tree rooted at $o$ and fix $k\in\N$.

\begin{definition}\label{Lipit} The {\textit{iterated logarithmic Lipschitz space}} $\Lip^{(k)}$ is the set of functions $f$ on $T$ satisfying the condition $$\sup_{v\in T^*}|v|\prod_{j=0}^{k-1}\ell_{j}(|v|)Df(v)<\infty,$$
where for $x\ge 1$ the sequence $\ell_j(x)$ is defined recursively by
\ben \ell_{j}(x)=\begin{cases}1&\quad\hbox{ for }j=0,\\
1+\ln x&\quad\hbox{ for }j=1,\\1+\ln \ell_{j-1}(x)&\quad\hbox{ for }j\ge 2.\end{cases}\label{indcond}\eeqn
\end{definition}

For $f\in \Lipk$, define
$$\|f\|_k=|f(o)|+\sup_{v\in T^*}|v|\prod_{j=0}^{k-1}\ell_j(|v|)Df(v). $$

Notice that $\Lip^{(1)}$ is precisely the weighted Lipschitz space $\LipW$ and $\|f\|_1=\|f\|_{\textbf{w}}$.

\begin{proposition}\label{modulus_est} If \rm{$f\in \Lip^{(k)}$} and $v\in T^*$, then
\rm{\ben |f(v)|\le \ell_k(|v|)\|f\|_k.\label{modulusest}\eeqn}
\end{proposition}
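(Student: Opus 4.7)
The plan is to estimate $|f(v)|$ by telescoping along the geodesic from $o$ to $v$ and then bound the resulting sum by an integral that integrates exactly to $\ell_k$.

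First I would let $[o = v_0, v_1, \ldots, v_n = v]$ be the unique path from $o$ to $v$, where $n = |v|$. Writing $f(v) - f(o) = \sum_{i=1}^n (f(v_i) - f(v_{i-1}))$ and using $|v_i| = i$, the triangle inequality gives
$$|f(v)| \le |f(o)| + \sum_{i=1}^n Df(v_i).$$
By the definition of $\|f\|_k$, setting $M = \|f\|_k - |f(o)|$, I have $Df(v_i) \le M / \bigl(i \prod_{j=1}^{k-1}\ell_j(i)\bigr)$ (using $\ell_0 \equiv 1$), so the problem reduces to bounding
$$S_n := \sum_{i=1}^{n} \frac{1}{i \prod_{j=1}^{k-1}\ell_j(i)}.$$

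The key technical observation is that a direct computation via the chain rule yields
$$\frac{d}{dx}\,\ell_k(x) = \frac{1}{x \prod_{j=1}^{k-1}\ell_j(x)} \qquad (x \ge 1),$$
which one proves by induction on $k$ from the recursion \eqref{indcond}. Since the integrand is decreasing on $[1,\infty)$, I would isolate the $i=1$ term (which equals $1$ because $\ell_j(1)=1$ for all $j$) and compare the remaining terms with the integral over $[1,n]$, obtaining
$$S_n \le 1 + \int_1^n \frac{dx}{x \prod_{j=1}^{k-1} \ell_j(x)} = 1 + \ell_k(n) - \ell_k(1) = \ell_k(|v|).$$

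Combining these estimates gives $|f(v)| \le |f(o)| + M\,\ell_k(|v|) = \|f\|_k\,\ell_k(|v|) + |f(o)|\bigl(1 - \ell_k(|v|)\bigr)$. Since $\ell_k$ is increasing with $\ell_k(1)=1$, the factor $1 - \ell_k(|v|)$ is nonpositive for $v \in T^*$, so the last term may be discarded, yielding the desired inequality \eqref{modulusest}. The only non-routine step is the integral comparison identity for $\ell_k'$, but it is an elementary induction; the rest is standard telescoping.
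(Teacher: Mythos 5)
Your proof is correct. The skeleton is the same as the paper's --- telescope $f$ along the geodesic from $o$ to $v$, bound each increment $Df(v_i)$ by $(\|f\|_k-|f(o)|)/\bigl(i\prod_{j=0}^{k-1}\ell_j(i)\bigr)$, and show the resulting sum is at most $\ell_k(|v|)$ --- but you establish the key inequality differently. The paper runs an induction on $|v|$ whose inductive step rests on Lemma~\ref{noteasy}, i.e.\ on the positivity of $\f_{k,n}=n\prod_{j=0}^{k-1}\ell_j(n)[\ell_k(n)-\ell_k(n-1)]-1$, which it proves rather indirectly by showing $\{\f_{k,n}\}$ is decreasing with limit $0$ (itself requiring Lemma~\ref{easy}). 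You instead observe that $\ell_k'(x)=1/\bigl(x\prod_{j=0}^{k-1}\ell_j(x)\bigr)$ is decreasing, so $\ell_k'(i)\le\int_{i-1}^{i}\ell_k'(x)\,dx=\ell_k(i)-\ell_k(i-1)$; summed, this gives $S_n\le 1+\ell_k(n)-\ell_k(1)=\ell_k(n)$ in one line. This is exactly the content of the positivity assertion in Lemma~\ref{noteasy}, obtained more directly (the paper still needs the monotonicity and limit statements of that lemma elsewhere, e.g.\ in Section~\ref{ess_norm}, so it cannot be dispensed with globally). Your handling of the $f(o)\neq 0$ case --- keeping $|f(o)|$ in the telescoping and discarding the nonpositive term $|f(o)|(1-\ell_k(|v|))$ --- is algebraically equivalent to the paper's device of passing to $g=f-f(o)$ and using $\ell_k(|v|)\ge 1$. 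All the auxiliary facts you invoke ($\ell_j(1)=1$, monotonicity of $\ell_j$, the derivative formula) are elementary and consistent with the paper's equation~(\ref{derivative}).
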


For the proof we need the following results.

\begin{lemma}\label{easy} For all $k\in\N$, the sequence $\{\a_k(n)\}_{n=2}^\infty$ defined by
$\a_k(n)=\frac{\ell_k(n)}{\ell_k(n-1)}$ is decreasing and
\ben \displaystyle\lim\limits_{n\to\infty}\frac{\ell_k(n)}{\ell_k(n-1)}=1.\label{limit1}\eeqn
\end{lemma}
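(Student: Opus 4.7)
The plan is to prove the lemma by induction on $k$, treating $k=1$ as the base case since the recursion $\ell_j = 1+\ln \ell_{j-1}$ applies only for $j\ge 2$.

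For the base case $k=1$, I would rewrite
\[
\a_1(n)=\frac{1+\ln n}{1+\ln(n-1)}=1+\frac{\ln(n/(n-1))}{1+\ln(n-1)},
\]
and then observe that the numerator $\ln(1+\frac{1}{n-1})$ is positive, strictly decreasing in $n$, and tends to $0$, while the denominator $1+\ln(n-1)$ is positive, strictly increasing, and tends to $\infty$. Both the monotonicity and the limit claim for $k=1$ follow at once.

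For the inductive step, assume the conclusion holds for the index $k-1$ with $k\ge 2$. Using $\ell_k(x)=1+\ln \ell_{k-1}(x)$, I would derive the key identity
\[
\a_k(n)=1+\frac{\ln \ell_{k-1}(n)-\ln \ell_{k-1}(n-1)}{\ell_k(n-1)}=1+\frac{\ln \a_{k-1}(n)}{\ell_k(n-1)}.
\]
A short secondary induction records that every $\ell_j$ with $j\ge 1$ is strictly increasing and unbounded on $[1,\infty)$, so $\a_{k-1}(n)>1$ and $\ell_k(n-1)\to\infty$. The inductive hypothesis then gives that $\a_{k-1}(n)$ decreases to $1$, hence $\ln \a_{k-1}(n)$ is positive and decreases to $0$. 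The quotient on the right is therefore positive, strictly decreasing, and tending to $0$, which simultaneously yields the decreasing property of $\a_k$ and the limit in \eqref{limit1}.

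The only mild subtlety is separating $k=1$ from the inductive pattern, because the recursion for $\ell_j$ does not reach back to $j=0$ (where $\ell_0\equiv 1$ would make the formula degenerate); once this is noted, the remainder is bookkeeping on monotone positive sequences.
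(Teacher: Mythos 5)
Your proof is correct, and it takes a genuinely different route from the paper. The paper splits the two claims: it proves monotonicity by a calculus argument, differentiating $x\mapsto \ell_k(x)/\ell_k(x-1)$ using the closed form $\ell_k'(x)=1/\bigl(x\prod_{j=0}^{k-1}\ell_j(x)\bigr)$ and reducing the sign condition to the evident inequality $(n-1)\prod_{j=0}^{k}\ell_j(n-1)<n\prod_{j=0}^{k}\ell_j(n)$; it then proves the limit separately by induction on $k$ via l'H\^opital's rule and that same derivative formula. You instead run a single induction on $k$ built on the algebraic identity $\a_k(n)=1+\ln\a_{k-1}(n)/\ell_k(n-1)$ (valid since $\ell_k(n)-\ell_k(n-1)=\ln\a_{k-1}(n)$ for $k\ge 2$), from which both the monotonicity and the limit drop out at once: a positive decreasing numerator over a positive increasing denominator. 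Your version is more elementary --- no derivatives, no l'H\^opital --- and handles both conclusions in one pass; the paper's version has the side benefit that the derivative formula it establishes is reused in its l'H\^opital computation, and its monotonicity argument does not require the inductive hypothesis at all. The only housekeeping your argument needs, and which you correctly flag, is the secondary induction showing each $\ell_j$ with $j\ge 1$ is strictly increasing (so $\a_{k-1}(n)>1$ and $\ell_k(n-1)$ is increasing and $\ge 1$); with that in place the proof is complete.
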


\noindent\textit{Proof.} We first show that $\a_k(n)$ is decreasing. As an immediate consequence of (\ref{indcond}), observe that for each $x>1$,
\ben \ell_{k}'(x)=\frac1{x\prod_{j=0}^{k-1}\ell_j(x)}. \label{derivative}\eeqn For $\a_k$ to be decreasing, the derivative must be negative. Thus, it suffices to show that $\ell_k'(n)\ell_k(n-1)<\ell_k'(n-1)\ell_k(n),$ that is,
$$\frac{\ell_k(n-1)}{n\prod_{j=0}^{k-1}\ell_j(n)}<\frac{\ell_k(n)}{(n-1)\prod_{j=0}^{k-1}\ell_j(n-1)}.$$
This is equivalent to $$(n-1)\prod_{j=0}^k\ell_j(n-1)< n\prod_{j=0}^k\ell_j(n),$$
which is clearly satisfied.

To show (\ref{limit1}), we argue by induction on $k$. For $k=1$ the result is immediate. Assume the result true for all $j\le k-1$ (where $k\ge 2$). Then by l'H\^opital's rule, we have
\ben \lim_{n\to\infty}\frac{\ell_k(n)}{\ell_k(n-1)}&=&\lim_{n\to\infty}\frac{1+\ln \ell_{k-1}(n)}{1+\ln \ell_{k-1}(n-1)}=\lim_{n\to\infty}\frac{\ell_{k-1}(n-1)\ell_{k-1}'(n)}{\ell_{k-1}(n)\ell_{k-1}'(n-1)}.\label{hop}\eeqn
Thus, (\ref{hop}), (\ref{derivative}), and the induction hypothesis yield
$$\lim_{n\to\infty}\frac{\ell_k(n)}{\ell_k(n-1)}=\lim_{n\to\infty}\left(\frac{n-1}{n}\right)\prod_{j=0}^{k-1}\frac{\ell_{j}(n-1)}{\ell_{j}(n)}=1.\hskip20pt\square$$

\begin{lemma}\label{noteasy} For $k\in\N$ the sequence $\{\f_{k,n}\}_{n=2}^\infty$ defined by
$$\f_{k,n}=n\prod_{j=0}^{k-1}\ell_{j}(n)\left[\ell_k(n)-\ell_k(n-1)\right]-1$$ is positive, decreasing, and $\lim\limits_{n\to\infty}\f_{k,n}=0$.
\end{lemma}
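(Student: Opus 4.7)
The plan is to recast $\f_{k,n}$ as a single integral and then read off all three assertions from that representation. Writing $g_k(x) = x\prod_{j=0}^{k-1}\ell_j(x)$, equation (\ref{derivative}) reads $\ell_k'(x) = 1/g_k(x)$, so the fundamental theorem of calculus yields $\ell_k(n) - \ell_k(n-1) = \int_{n-1}^{n} dx/g_k(x)$, and consequently
$$\f_{k,n} = \int_{n-1}^{n} \left(\frac{g_k(n)}{g_k(x)} - 1\right) dx.$$
I would derive each of the three claims directly from this formula.

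Positivity is then immediate: $g_k$ is strictly increasing on $[1,\infty)$, so the integrand is nonnegative and strictly positive on a subinterval of $[n-1,n)$, forcing the integral to be positive. For the limit, I would bound the integrand on $[n-1,n]$ pointwise by $g_k(n)/g_k(n-1)-1$ and apply Lemma \ref{easy} to each $\ell_j$ (together with the trivial $n/(n-1)\to 1$) to conclude that this upper bound tends to $0$, so $\f_{k,n}\to 0$.

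The main obstacle is monotonicity. Substituting $x = y+1$ in the integral for $\f_{k,n+1}$, the desired inequality $\f_{k,n+1}\le \f_{k,n}$ becomes
$$\int_{n-1}^{n} \left[\frac{g_k(n+1)}{g_k(y+1)} - \frac{g_k(n)}{g_k(y)}\right] dy \le 0,$$
which I would deduce from the pointwise bound $g_k(n+1)/g_k(n) \le g_k(y+1)/g_k(y)$ for $y \in [n-1, n]$. In other words, it suffices to show that $h(y) = g_k(y+1)/g_k(y)$ is decreasing on $[1,\infty)$. Writing
$$h(y) = \frac{y+1}{y} \prod_{j=0}^{k-1} \frac{\ell_j(y+1)}{\ell_j(y)},$$
the factor $(y+1)/y$ is obviously decreasing, and each factor $\ell_j(y+1)/\ell_j(y)$ is decreasing by the very derivative computation already carried out in the proof of Lemma \ref{easy}: that argument actually shows $\ell_j(x)/\ell_j(x-1)$ is a decreasing function of a real variable $x \ge 2$, which is precisely what is needed. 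Thus the monotonicity of $\f_{k,n}$ reduces, factor by factor, to the monotonicity content of Lemma \ref{easy}.
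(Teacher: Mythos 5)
Your proof is correct, and it takes a genuinely different route from the paper's. The paper proves monotonicity and the limit by induction on $k$: the key observation there is the multiplicative recursion $\f_{k+1,n}+1=(\f_{k,n}+1)\,\g_{k,n}$ with $\g_{k,n}=\a_k(n)\frac{\ln\a_k(n)}{\a_k(n)-1}$, whose monotonicity in $n$ comes from the monotonicity of $t\mapsto t\ln t/(t-1)$ together with Lemma~\ref{easy}; positivity is then deduced a posteriori from ``decreasing with limit $0$.'' Your integral representation $\f_{k,n}=\int_{n-1}^{n}\bigl(g_k(n)/g_k(x)-1\bigr)\,dx$ avoids the induction entirely and treats all values of $k$ uniformly: positivity becomes an independent one-line observation (rather than a consequence of the other two claims), the limit follows from the crude pointwise bound $g_k(n)/g_k(n-1)-1$, and monotonicity reduces to the decrease of $y\mapsto g_k(y+1)/g_k(y)$, which factors precisely into the real-variable content of the derivative computation in Lemma~\ref{easy} (plus the trivial factor $(y+1)/y$). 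A side benefit of your argument is that it also covers the base case $k=1$, where the paper simply asserts without detail that $n(\ln n-\ln(n-1))-1$ is decreasing; a side benefit of the paper's argument is that the identity $\f_{k+1,n}+1=(\f_{k,n}+1)\g_{k,n}$ makes the passage from one logarithmic level to the next explicit. The only points worth making explicit in a write-up of your version are that $g_k$ is (strictly) increasing on $[1,\infty)$ because each $\ell_j$ is nonnegative and nondecreasing there, and that the factors $\ell_j(y+1)/\ell_j(y)$ are positive, so the product of decreasing positive factors is indeed decreasing.
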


\begin{proof} To prove that $\{\f_{k,n}\}$ is decreasing, we shall argue by induction on $k$. For $k=1$, $\f_{k,n}=n(\ln(n)-\ln(n-1))-1$ which is decreasing. Assume $\f_{k,n}$ is decreasing for some $k\in\N$. Observe that
\ben \frac{\f_{k+1,n}+1}{\f_{k,n}+1}&=&\frac{n\prod_{j=0}^{k}\ell_j(n)[\ell_{k+1}(n)-\ell_{k+1}(n-1)]}{n\prod_{j=0}^{k-1}\ell_j(n)[\ell_{k}(n)-\ell_{k}(n-1)]}\nonumber\\
&=&\ell_{k}(n)\left[\frac{\ln \ell_{k}(n)-\ln \ell_{k}(n-1)}{\ell_{k}(n)-\ell_{k}(n-1)}\right]\nonumber\\
&=&\frac{\ell_{k}(n)}{\ell_{k}(n-1)}\left[\frac{\ln\left(\frac{\ell_{k}(n)}{\ell_{k}(n-1)}\right)}{\frac{\ell_{k}(n)}{\ell_{k}(n-1)}-1}\right]=\g_{k,n}.\label{decr}\eeqn
The function $\g_{k,n}$ can be expressed as
$$\g_{k,n}=\a_k(n)\left[\frac{\ln(\a_k(n))}{\a_k(n)-1}\right],$$
which is increasing as a function of $\a_k(n)$. However, by Lemma~\ref{easy}, $\a_k(n)$ decreases to 1 as $n$ increases. Thus, $\g_{k,n}$ is decreasing in $n$.
Therefore, by (\ref{decr}) and by the inductive hypothesis, we see that
$\f_{k+1,n}+1=(\f_{k,n}+1)\g_{k,n}$, which is decreasing as a product of two positive decreasing functions. Hence $\f_{k+1,n}$ is decreasing in $n$ for each $k\in\N$.

We next show that
\ben \displaystyle\lim\limits_{n\to\infty}\f_{k,n}=0,\label{limit0}\eeqn arguing by induction on $k$. From this it will follow that the sequence is positive.  For $k=1$, we have
$$\lim_{n\to\infty}\f_{1,n}=\lim_{n\to\infty}n\left[1+\ln n-(1+\ln(n-1)\right]-1=\lim_{n\to\infty}n\ln\left[\frac{n}{n-1}\right]-1=0.$$
Assume (\ref{limit0}) holds for some $k\ge 1$. Then, by (\ref{limit1}) and noting that
$\displaystyle\lim\limits_{\a\to 1}\frac{\ln \a}{\a-1}=1$, we obtain $\lim\limits_{n\to\infty}\g_{k,n}=1$, whence
$$\lim_{n\to\infty}\frac{\f_{k+1,n}+1}{\f_{k,n}+1}=1.$$
From the inductive hypothesis we deduce that $\lim\limits_{n\to\infty}\f_{k+1,n}=0.$
\end{proof}

\noindent{\textit{Proof of Proposition~\ref{modulus_est}.}} Let us first assume $f(o)=0$ and argue by induction on $|v|$. Since $\ell_k(1)=1$ for each $k\in\N$, for $|v|=1$, we have $$|f(v)|=\ell_k(|v|)Df(v)\le \|f\|_k=\ell_k(|v|)\|f\|_k.$$
Let $n\in\N$, $n\ge 2$, and assume $|f(w)|\le \ell_k(|w|)\|f\|_k$ whenever $w$ is a vertex such that $1\le |w|<n$. Let $v$ be a vertex of length $n$. Then, by Lemma~\ref{noteasy} we obtain
\ben |f(v)|&\le &|f(v^-)|+|f(v)-f(v^-)|\nonumber\\&\le& \left[\ell_k(|v|-1)+\frac1{|v|\prod_{j=0}^{k-1}\ell_j(|v|)}\right]\|f\|_k
\le \ell_k(|v|)\|f\|_k.\nonumber\eeqn

On the other hand, if $f(o)\ne 0$, letting $g(v)=f(v)-f(o)$ for $v\in T$, we obtain $|g(v)|\le \ell_k(|v|)\|g\|_k$ for $v\in T^*$. Since $\|f\|_k=|f(o)|+\|g\|_k$ and $\ell_k(|v|)\ge 1$, we deduce  \ben |f(v)|\le |f(o)|+|g(v)|\le |f(o)|+\ell_k(|v|)\|g\|_k\le \ell_k(|v|)\|f\|_k,\nonumber\eeqn completing the proof.\hskip260pt$\square$

\begin{theorem}\label{Banach} The iterated logarithmic Lipschitz space $\Lip^{(k)}$ is a Banach space under the norm $\|\cdot\|_k$.
\end{theorem}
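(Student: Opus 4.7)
The plan is to follow the standard Banach space verification, reducing completeness to pointwise convergence by means of Proposition~\ref{modulus_est}.

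First I would check that $\|\cdot\|_k$ is indeed a norm on $\Lip^{(k)}$. Non-negativity, positive homogeneity, and the triangle inequality are immediate from the definition, since each summand is linear in $f$ under the supremum. For definiteness, if $\|f\|_k=0$, then $f(o)=0$ and $Df(v)=0$ for every $v\in T^*$; since $T$ is connected, walking along the unique path from $o$ to any vertex $v$ and using $Df\equiv 0$ telescopes to $f(v)=f(o)=0$.

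For completeness, let $\{f_n\}$ be a Cauchy sequence in $\Lip^{(k)}$. Applying Proposition~\ref{modulus_est} to $f_n-f_m$, I obtain
\[
|f_n(v)-f_m(v)|\le \ell_k(|v|)\,\|f_n-f_m\|_k
\]
for every $v\in T^*$, while at the root $|f_n(o)-f_m(o)|\le \|f_n-f_m\|_k$. Thus $\{f_n(v)\}$ is Cauchy in $\C$ at each vertex, so we may define the pointwise limit $f(v)=\lim_n f_n(v)$.

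Next I would show $f\in\Lip^{(k)}$ and that $f_n\to f$ in norm simultaneously, which is the only mildly delicate step. Since $\{f_n\}$ is Cauchy, it is bounded, say $\|f_n\|_k\le M$. Fix $v\in T^*$; by the pointwise convergence,
\[
|v|\prod_{j=0}^{k-1}\ell_j(|v|)Df(v)=\lim_{n\to\infty}|v|\prod_{j=0}^{k-1}\ell_j(|v|)Df_n(v)\le M,
\]
and taking the supremum over $v$ yields $f\in\Lip^{(k)}$. For norm convergence, given $\varepsilon>0$, choose $N$ with $\|f_n-f_m\|_k<\varepsilon$ for $n,m\ge N$. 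For any fixed $v\in T^*$ and $n,m\ge N$,
\[
|v|\prod_{j=0}^{k-1}\ell_j(|v|)|D(f_n-f_m)(v)|<\varepsilon,
\]
and letting $m\to\infty$ the left-hand side converges to $|v|\prod_{j=0}^{k-1}\ell_j(|v|)|D(f_n-f)(v)|$, which is therefore $\le \varepsilon$. Taking the supremum in $v$ and adding the analogous bound $|f_n(o)-f(o)|\le\varepsilon$ obtained from the root-coordinate of the Cauchy condition gives $\|f_n-f\|_k\le 2\varepsilon$ for $n\ge N$.

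The only real obstacle is ensuring that the pointwise limit interacts correctly with the weighted supremum, but this is handled uniformly in $v$ by freezing $n$ and letting $m\to\infty$ inside a fixed-vertex inequality before taking the supremum—the same trick used to show completeness of $\Lip$ and $\LipW$ in the earlier papers, so no new ideas beyond Proposition~\ref{modulus_est} are required.
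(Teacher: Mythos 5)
Your proof is correct and follows the same overall skeleton as the paper: verify the norm axioms (using connectedness of $T$ for definiteness), use Proposition~\ref{modulus_est} to get pointwise Cauchyness and hence a pointwise limit $f$, then establish membership and norm convergence. The one place you diverge is the final norm-convergence step: you use the standard direct device of fixing $v$ and $n\ge N$, letting $m\to\infty$ inside the inequality $|v|\prod_{j=0}^{k-1}\ell_j(|v|)D(f_n-f_m)(v)<\e$, and only then taking the supremum over $v$; the paper instead argues by contradiction, extracting a subsequence and a sequence of vertex pairs $v_{n_h}, w_{n_h}$ where the weighted difference stays above $\e$, and then playing the Cauchy condition against pointwise convergence at a single fixed pair. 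The two arguments are logically equivalent, but yours is shorter and avoids the subsequence bookkeeping; the only thing to be careful about (and you are) is that the passage to the limit in $m$ happens at a fixed vertex, where the weight $|v|\prod_{j=0}^{k-1}\ell_j(|v|)$ is just a constant, before the supremum is taken. No gap.
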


\begin{proof} It is immediate to see that $\Lipk$ is a vector space and that $f\mapsto \knorm{f}$ is a semi-norm. It is also evident that the norm of the function identically 0 is 0. Conversely, assume $\knorm{f}=0$. Then $Df$ is identically 0. Thus, $f$ is a constant and since $f(o)=0$, $f$ is identically 0.

To prove that $\Lipk$ is a Banach space, let $\{f_n\}$ be Cauchy in $\Lipk$. For $n,m\in \N$, since $|f_n(o)-f_m(o)|\le \knorm{f_n-f_m}$, and by Proposition~\ref{modulus_est}, for $v\in T^*$,
\ben |f_n(v)-f_m(v)|\le \ell_k(|v|)\knorm{f_n-f_m},\nonumber\eeqn
the sequence $\{f_n(v)\}$ is Cauchy for each $v\in T$. Hence it converges pointwise to some function $f$.
We now show that $f\in\Lipk$.

Fix $v\in T^*$ and $\e>0$. Then there exists $N\in\N$ such that for all $n\ge N$
$$|f_n(v)-f(v)|<\displaystyle\frac{\e}{2|v|\prod_{j=0}^{k-1}\ell_j(|v|)}\ \hbox{ and }\ |f_n(v^-)-f(v^-)|<\displaystyle\frac{\e}{2|v|\prod_{j=0}^{k-1}\ell_j(|v|)}.$$ Thus, for $n\ge N$, we have
\ben\label{Eq1}|v|\prod_{j=0}^{k-1}\ell_j(|v|)Df(v) &\le& |v|\prod_{j=0}^{k-1}\ell_j(|v|)|f(v)-f_n(v)|+|v|\prod_{j=0}^{k-1}\ell_j(|v|)Df_n(v)\\&\ &\quad +|v|\prod_{j=0}^{k-1}\ell_j(|v|)|f_n(v^-)-f(v^-)|\nonumber\\
&\le& \e+\knorm{f_n}.\nonumber\eeqn
Since $\{f_n\}$ is Cauchy in $\Lipk$, and hence bounded, $\{\knorm{f_n}\}$ is uniformly bounded by some constant $C$, and so letting $\e\to 0$, (\ref{Eq1}) yields $|v|\prod_{j=0}^{k-1}\ell_{j}(|v|)Df(v)\le C.$ Therefore $f\in\Lipk$.

To conclude the proof of the completeness, we need to show that $\{f_n\}$ converges to $f$ in norm as $n\to\infty$.  Since $f_n(o)\to f(o)$, it suffices to show that $$\sup\limits_{v\in T^*}|v|\prod_{j=0}^{k-1}\ell_{j}(|v|)D(f_n-f)(v)\to 0$$ as $n\to\infty$. Arguing by contradiction, suppose there exist $\e>0$ and a subsequence $\{f_{n_h}\}_{h\in\N}$
 such that $\displaystyle\sup_{v\in T^*}|v|\prod_{j=0}^{k-1}\ell_{j}(|v|)D(f_{n_h}-f)(v)>\e$ for all $h\in\N$. Then for each $h\in\N$, we may pick two neighboring vertices $v_{n_h}$ and $w_{n_h}$, with $v_{n_h}$ child of $w_{n_h}$, such that $$|v_{n_h}|\prod_{j=0}^{k-1}\ell_{j}(|v_{n_h}|)\left|f_{n_h}(v_{n_h})-f(v_{n_h})-(f_{n_h}(w_{n_h})-f(w_{n_h}))\right|\ge \e.$$  Since $\{f_{n_h}\}$ is Cauchy in $\Lipk$, there exists a positive integer $h_0$ such that for each $h,h'\ge h_0$, and $v\in T^*$, we have
$$|v|\prod_{j=0}^{k-1}\ell_{j}(|v|)\left|f_{n_h}(v)-f_{n_{h'}}(v)-(f_{n_{h}}(v^-)-f_{n_{h'}}(v^-))\right|\le \knorm{f_{n_h}-f_{n_{h'}}}<\frac{\e}{2}.$$ In particular, for all $h\ge h_0$, we have \small \ben\label{firstest} \ \ \  %\ \ \
|v_{n_{h_0}}|\prod_{j=0}^{k-1}\ell_{j}(|v_{n_{h_0}}|)\left|f_{n_{h_0}}(v_{n_{h_0}})-f_{n_{h}}(v_{n_{h_0}})-(f_{n_{h_0}}(w_{n_{h_0}})-f_{n_{h}}(w_{n_{h_0}}))\right|<\frac{\e}{2}.\eeqn \normalsize On the other hand, by the pointwise convergence of $f_{n_h}$ to $f$, for all integers $h$ sufficiently large
\small \ben\label{sndest} \ \ \ \ \ \left|f_{n_h}(v_{n_{h_0}})-f(v_{n_{h_0}})-(f_{n_{h}}(w_{n_{h_0}})-f(w_{n_{h_0}}))\right|<\frac{\e}{2|v_{n_{h_0}}|\prod_{j=0}^{k-1}\ell_{j}(|v_{n_{h_0}}|)}.\eeqn\normalsize
Applying the triangle inequality, from (\ref{firstest}) and (\ref{sndest}) we obtain
$$|v_{n_{h_0}}|\prod_{j=0}^{k-1}\ell_{j}(|v_{n_{h_0}}|)\left|f_{n_{h_0}}(v_{n_{h_0}})-f(v_{n_{h_0}})-(f_{n_{h_0}}(w_{n_{h_0}})-f(w_{n_{h_0}}))\right|<\e,$$ contradicting the choice of $v_{n_{h_0}}$ and $w_{n_{h_0}}$. Therefore $\Lipk$ is a Banach space.\end{proof}

A Banach space $X$ of complex-valued functions on a set $\Omega$ is said to be a \emph{functional Banach space} if for each $\o\in \Omega$, the point evaluation functional $e_\o(f) = f(\o),\, f \in X,$ is bounded.

\begin{lemma}\label{drs}{\rm{(\cite{DurenRombergShields:69}, Lemma 11)}} Let $X$ be a functional Banach space on the set $\Omega$ and let $\psi$ be a complex-valued function on $\Omega$ such that $M_\psi$ maps $X$ into itself. Then $M_\psi$ is bounded on $X$ and $|\psi(\o)|\le \|M_\psi\|$ for all $\o\in\O$. In particular, $\psi$ is bounded.\end{lemma}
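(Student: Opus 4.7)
The plan is to derive boundedness of $M_\psi$ from the closed graph theorem, and to derive the pointwise estimate $|\psi(\omega)| \le \|M_\psi\|$ from a Gelfand spectral radius argument transported through the point evaluation functionals.

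First I would show that the graph of $M_\psi$ is closed in $X \times X$. Suppose $\{f_n\} \subseteq X$ converges to $f$ and $\{M_\psi f_n\}$ converges to some $g$ in $X$. Since $X$ is a functional Banach space, every point evaluation $e_\omega$ is continuous, so norm convergence forces pointwise convergence: for each $\omega \in \Omega$, $f_n(\omega) \to f(\omega)$ and $(M_\psi f_n)(\omega) \to g(\omega)$. On the other hand, $(M_\psi f_n)(\omega) = \psi(\omega) f_n(\omega) \to \psi(\omega) f(\omega) = (M_\psi f)(\omega)$. Hence $g = M_\psi f$, so the graph of $M_\psi$ is closed. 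Since $M_\psi$ is defined on all of $X$, the closed graph theorem yields boundedness.

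Next I would establish the pointwise bound. Fix $\omega \in \Omega$. If $f(\omega) = 0$ for every $f \in X$, then $e_\omega = 0$ and the value $\psi(\omega)$ plays no role in $M_\psi$, so we may redefine $\psi(\omega) := 0$ and the inequality is trivial. Otherwise, choose $f \in X$ with $f(\omega) \neq 0$. Iterating the hypothesis that $M_\psi$ maps $X$ into itself gives $M_\psi^n f = \psi^n f \in X$ for every $n \in \N$, and therefore
$$|\psi(\omega)|^n \, |f(\omega)| \;=\; |(M_\psi^n f)(\omega)| \;\le\; \|e_\omega\|\, \|M_\psi^n f\| \;\le\; \|e_\omega\|\, \|M_\psi\|^n\, \|f\|.$$
Taking $n$-th roots and letting $n \to \infty$ yields $|\psi(\omega)| \le \|M_\psi\|$, and taking the supremum over $\omega$ gives the boundedness of $\psi$.

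The only technical subtlety is the edge case where $e_\omega$ vanishes identically, which is disposed of by the harmless modification of $\psi$ indicated above; in the paper's tree setting this case does not arise, since constant functions lie in $\Lip^{(k)}$ and so $e_v \neq 0$ for every $v \in T$. The nontrivial ingredient is really the pairing of the closed graph theorem (for boundedness) with the spectral radius trick applied pointwise through the continuous functionals $e_\omega$.
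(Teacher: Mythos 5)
The paper does not prove this lemma; it is imported verbatim from Duren--Romberg--Shields (their Lemma~11), so there is no internal proof to compare against. Your argument is correct and is in fact the classical proof of that result: the closed graph theorem (with the point evaluations supplying the pointwise identification of the limit) gives boundedness, and the iteration $M_\psi^n f=\psi^n f$ together with $|\psi(\omega)|^n|f(\omega)|\le \|e_\omega\|\,\|M_\psi\|^n\|f\|$ and an $n$-th root gives the pointwise bound. Your handling of the degenerate case $e_\omega=0$ is the right instinct: as literally stated the inequality $|\psi(\omega)|\le\|M_\psi\|$ can fail there, and the usual convention (implicit in the cited reference and satisfied in this paper, since the constants lie in $\Lip^{(k)}$ so $e_v\ne 0$ for every $v$) is that a functional Banach space has no such points, so no modification of $\psi$ is actually needed in the setting where the lemma is applied.
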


\begin{corollary}\label{funct_Banach} The set \rm{$\Lipk$} is a functional Banach space. If $M_\psi$ is a multiplication operator on \rm{$\Lipk$}, then $M_\psi$ is bounded, its symbol $\psi$ is bounded and $\|\psi\|_\infty\le \|M_\psi\|$.
\end{corollary}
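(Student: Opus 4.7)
The plan is to invoke Lemma~\ref{drs} directly after verifying that $\Lipk$ is a functional Banach space. The completeness and norm structure have already been established in Theorem~\ref{Banach}, so what remains is to bound every point evaluation functional $e_v \colon f \mapsto f(v)$ for $v\in T$.

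First, I would split into two cases based on whether $v$ is the root. For $v=o$, the bound $|e_o(f)|=|f(o)|\le \knorm{f}$ is immediate from the definition of $\knorm{\cdot}$, so $e_o$ is bounded with norm at most $1$. For $v\in T^*$, I would apply Proposition~\ref{modulus_est} to obtain
\[
|e_v(f)|=|f(v)|\le \ell_k(|v|)\knorm{f},
\]
so that $e_v$ is bounded with norm at most $\ell_k(|v|)$. This shows every point evaluation is a bounded linear functional on $\Lipk$, and hence $\Lipk$ is a functional Banach space on the set $T$.

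The second conclusion then follows verbatim from Lemma~\ref{drs}: if $M_\psi$ maps $\Lipk$ into itself, then automatically $M_\psi$ is bounded, and its symbol satisfies $|\psi(v)|\le \|M_\psi\|$ for every $v\in T$. Taking the supremum over $v$ yields $\|\psi\|_\infty\le\|M_\psi\|$, which completes the statement.

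I do not anticipate any real obstacle; the work has already been done in Proposition~\ref{modulus_est} and Theorem~\ref{Banach}, and Lemma~\ref{drs} is a black-box application. The only very mild care needed is the separate treatment of $v=o$, which is not covered by Proposition~\ref{modulus_est} but is trivial from the definition of $\knorm{\cdot}$.
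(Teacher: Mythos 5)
Your proposal is correct and follows essentially the same route as the paper: both arguments bound $e_o$ directly from the definition of $\knorm{\cdot}$, bound $e_v$ for $v\in T^*$ via Proposition~\ref{modulus_est}, and then quote Lemma~\ref{drs} for the second assertion. No gaps; your explicit note that $v=o$ is not covered by Proposition~\ref{modulus_est} is exactly the mild care the paper's proof also takes.
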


\begin{proof} For $f\in\Lipk$, $|f(o)|\le \knorm{f}$. Furthermore, fixing $v\in T^*$, inequality (\ref{modulusest}) shows that the point evaluation functional $e_v(f)=f(v)$ is bounded. Thus, $\Lipk$ is a functional Banach space. The second statement follows immediately from Lemma~\ref{drs}.\end{proof}

\begin{definition}\label{littleLip} The {\it{iterated logarithmic little Lipschitz space}} is the subspace $\Lipko$ of $\Lipk$ consisting of the functions $f$ such that $$\lim_{|v|\to\infty}|v|\prod_{j=0}^{k-1}\ell_j(|v|)Df(v)=0.$$\end{definition}

\begin{proposition}\label{lm0} If \rm{$f\in\Lipko$}, then $\displaystyle\lim\limits_{|v|\to\infty}\frac{f(v)}{\ell_k(|v|)}=0.$
\end{proposition}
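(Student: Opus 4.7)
The plan is to estimate $|f(v)|$ by telescoping along the path from the root (or from a fixed-level ancestor) down to $v$, and then exploit the fact that the resulting sum is comparable to $\ell_k(|v|)$. The little-$o$ condition on $Df$ provides the smallness needed to conclude.

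More precisely, fix $\varepsilon>0$ and use the defining condition of $\Lipko$ to choose $N\in\N$ such that for every vertex $u$ with $|u|>N$,
$$|u|\prod_{j=0}^{k-1}\ell_j(|u|)Df(u)<\varepsilon.$$
For $v\in T^*$ with $|v|=n>N$, let $v=v_n, v_{n-1},\dots, v_N$ be the consecutive ancestors of $v$ (so $|v_i|=i$ and $v_{i-1}=v_i^-$). Telescoping gives
$$|f(v)-f(v_N)|\le\sum_{i=N+1}^{n}Df(v_i)<\varepsilon\sum_{i=N+1}^{n}\frac{1}{i\prod_{j=0}^{k-1}\ell_j(i)}.$$

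The key step is to bound this sum by $\ell_k(n)$. Here Lemma~\ref{noteasy} is decisive: the positivity of $\f_{k,i}$ gives
$$\frac{1}{i\prod_{j=0}^{k-1}\ell_j(i)}\le\ell_k(i)-\ell_k(i-1),$$
so the sum telescopes:
$$\sum_{i=N+1}^{n}\frac{1}{i\prod_{j=0}^{k-1}\ell_j(i)}\le\ell_k(n)-\ell_k(N)\le\ell_k(n).$$
Combining this with the bound $|f(v_N)|\le\ell_k(N)\|f\|_k$ from Proposition~\ref{modulus_est}, we obtain
$$\frac{|f(v)|}{\ell_k(n)}\le\frac{|f(v_N)|}{\ell_k(n)}+\varepsilon\le\frac{\ell_k(N)\|f\|_k}{\ell_k(n)}+\varepsilon.$$
Since $N$ is fixed while $\ell_k(n)\to\infty$ as $n\to\infty$ (by iterating $\ell_k(x)=1+\ln\ell_{k-1}(x)$), the first term tends to $0$, yielding $\limsup_{|v|\to\infty}|f(v)|/\ell_k(|v|)\le\varepsilon$. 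Letting $\varepsilon\to 0$ finishes the proof.

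The main obstacle is producing the telescoping estimate: a naive integral comparison $\int dx/(x\prod\ell_j(x))\asymp\ell_k(x)$ would give the right asymptotic, but the clean discrete inequality required for the argument is exactly what the positivity of $\f_{k,i}$ (Lemma~\ref{noteasy}) supplies. Everything else is bookkeeping: separating the first $N$ terms, invoking the pointwise bound from Proposition~\ref{modulus_est}, and using the divergence of $\ell_k$.
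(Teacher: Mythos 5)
Your proof is correct and follows essentially the same route as the paper's: choose $N$ from the little-$o$ condition, telescope $f$ along the path from the level-$N$ ancestor to $v$, control the resulting sum via the positivity of $\f_{k,i}$ from Lemma~\ref{noteasy} (the paper states this as $\ell_k(N)+\sum_h \bigl(|u_h|\prod_{j=0}^{k-1}\ell_j(|u_h|)\bigr)^{-1}\le \ell_k(|v|)$, which is your telescoped inequality), bound the ancestor term by Proposition~\ref{modulus_est}, and let $|v|\to\infty$ then $\e\to 0$. Your direct bound of the sum by $\ell_k(n)$ is in fact a slightly cleaner bookkeeping than the paper's rearrangement involving $\ell_k(N)(\|f\|_k-\e)$.
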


\begin{proof} If $f$ is constant, then $f$ is identically 0 and the result holds trivially. So assume $f$ is nonconstant and set $L=\sup\limits_{v\in T^*}|v|\prod_{j=0}^{k-1}\ell_j(|v|)Df(v)>0$. Fix $\e\in (0,L)$. Since $f\in\Lipko$, there exists $N\in\N$ such that $|v|\prod_{j=0}^{k-1}\ell_{j}(|v|)Df(v)<\e$, for all $v\in T$, with $|v|\ge N$. For $|w|=N$ and $v$ a descendant of $w$, let $u_0=w,u_1,\dots,u_{|v|-N}=v$ be the vertices in the path from $w$ to $v$, where ${u_h}^-=u_{h-1}$, $h=1,\dots,|v|-N$. By Lemma~\ref{noteasy}, we have
$$\ell_k(N)+\frac1{|u_1|\prod_{j=0}^{k-1}\ell_j(|u_1|)}\le \ell_k(|u_1|).$$
Thus, arguing inductively on $h$, we obtain
\ben \ell_k(N)+\sum_{h=1}^{|v|-N}\frac{1}{|u_h|\prod_{j=0}^{k-1}\ell_j(|u_h|)}\le \ell_k(|v|).\label{use}\eeqn
By the triangle inequality, Proposition~\ref{modulus_est}, and (\ref{use}), we have
\ben |f(v)|&\le &|f(w)|+\sum_{h=1}^{|v|-N}|f(u_h)-f(u_{h-1})|\nonumber\\
&\le &\ell_k(N)\knorm{f}+\e\sum_{h=1}^{|v|-N}\frac{1}{|u_h|\prod_{j=0}^{k-1}\ell_j(|u_h|)}\nonumber\\
&=&\ell_k(N)\left(\|f\|_k-\e\right)+\e\left[\ell_k(N)+\sum_{h=1}^{|v|-N}\frac1{{|u_h|\prod_{j=0}^{k-1}\ell_j(|u_h|)}}\right]\nonumber\\
&\le & \ell_k(N)\left(\|f\|_k-\e\right)+\e\ell_k(|v|).\nonumber\eeqn
Therefore, for all vertices $v$ of length greater than $N$ we obtain
$$\frac{|f(v)|}{\ell_k(|v|)}\le\frac{\ell_k(N)\left(\|f\|_k-\e\right)}{\ell_k(|v|)}+\e.$$
Hence $\lim\limits_{|v|\to\infty}\displaystyle\frac{|f(v)|}{\ell_k(|v|)}\le \e$. Letting $\e\to 0$, we obtain the result.\end{proof}

%%%%%%%%%%%%%%%%%%%%%%%%%%%%%%%%%%%%%%%%%%%%%%%%%%%%%%%%%%%%%%%%%%%%%%%%%%%%%%%%%%%%%%%%%%%%%
Let $\chi_A$ denote the characteristic function of the set $A$ and for $v\in T$ adopt the notation $\chi_v$ for the function $\chi_{\{v\}}$. Furthermore, for $v\in T$, let $p_v=\c_{S_v}$, where we recall that $S_v$ is the set consisting of $v$ and all its descendants.

\begin{proposition}\label{dense} The set $$\mathcal{P}=\left\{\sum\limits_{k=1}^N a_kp_{v_k}: N\in\N, a_k\in\C, v_k\in T, 1\le k\le N\right\}$$ is dense in $\Lipko$ with respect to the norm topology. In fact, $\Lipko$ is a closed separable subspace of $\Lipk$.
\end{proposition}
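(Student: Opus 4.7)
The plan is to prove density of $\mathcal{P}$ in $\Lipko$ via an explicit truncation, and then to deduce closedness and separability from it. First, I observe that $\mathcal{P} \subseteq \Lipko$: for $v \in T^*$, a vertex $u \ne v$ lies in $S_v$ if and only if $u^-$ does, so $Dp_v$ is supported at the single vertex $v$; hence $|u|\prod_{j=0}^{k-1}\ell_j(|u|)Dp_v(u)$ is eventually zero. Of course $p_o \equiv 1$ has vanishing derivative.

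For density, given $f \in \Lipko$, I introduce the truncations
$$f_N := f(o)\,p_o + \sum_{\substack{v \in T^* \\ |v|\le N}}(f(v) - f(v^-))\,p_v \;\in \mathcal{P}.$$
A telescoping computation yields $f_N(u) = f(u)$ when $|u|\le N$, and $f_N(u) = f(v_N)$ when $|u| > N$, where $v_N$ denotes the ancestor of $u$ at level $N$. Splitting into the three cases $|v|\le N$, $|v| = N+1$, and $|v| \ge N+2$, I check that $D(f-f_N)(v)$ equals $0$ in the first case (both terms vanish) and $Df(v)$ in the other two (either $(f-f_N)(v^-) = 0$ while $(f-f_N)(v) = f(v)-f(v^-)$, or $v$ and $v^-$ share the same level-$N$ ancestor, so the difference collapses again to $f(v)-f(v^-)$). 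Consequently
$$\knorm{f - f_N} = \sup_{|v|>N}|v|\prod_{j=0}^{k-1}\ell_j(|v|)Df(v),$$
which tends to $0$ as $N\to\infty$ by the very definition of $\Lipko$.

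Closedness of $\Lipko$ in $\Lipk$ is then a standard $\varepsilon/2$-argument: if $g_n \to g$ in $\knorm{\cdot}$ with $g_n \in \Lipko$, I control $|v|\prod_{j=0}^{k-1}\ell_j(|v|)Dg(v)$ by splitting into $D(g-g_n)$ (small uniformly in $v$ by norm convergence) and $Dg_n$ (small for large $|v|$ by the little-oh condition on $g_n$). Separability then follows from density of $\mathcal{P}$: restricting coefficients to $\Q + i\Q$ yields a countable subset (since $T$ is countable as a locally finite connected graph), and a one-line estimate using $\knorm{p_v} = |v|\prod_{j=0}^{k-1}\ell_j(|v|) < \infty$ shows it is dense in $\mathcal{P}$, hence in $\Lipko$. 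The main obstacle is the explicit computation for $f_N$; the key insight is that truncating the telescoping expansion $f = f(o)p_o + \sum_v(f(v)-f(v^-))p_v$ at level $N$ leaves $f_N$ agreeing with $f$ inside the ball of radius $N$ and constant on each sector beyond it, which is exactly what converts $\knorm{f-f_N}$ into the tail of the supremum defining $\Lipko$.
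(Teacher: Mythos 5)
Your proof is correct and takes essentially the same route as the paper: the truncation $f_N$ you build by telescoping is exactly the paper's approximant (equal to $f$ on the ball of radius $N$ and constant on each sector beyond it), the identity $\knorm{f-f_N}=\sup_{|v|>N}|v|\prod_{j=0}^{k-1}\ell_j(|v|)Df(v)$ is the same key computation, and the rational-coefficient argument for separability matches as well. The only differences are cosmetic --- you exhibit $f_N\in\mathcal{P}$ directly via the telescoping expansion rather than via the identity $\chi_v=p_v-\sum_{w\in[v^+]}p_w$, and you spell out the $\varepsilon/2$ argument for closedness, which the paper asserts without detail.
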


\begin{proof} Fix $v\in T$ and observe that $Dp_v=\c_v$, so that for $w\in T^*$, we have
$$|w|\prod_{j=0}^{k-1}\ell_j(|w|)Dp_v(w)=\begin{cases} 0&\quad\hbox{ if }w\ne v,\\
|v|\prod_{j=0}^{k-1}\ell_j(|v|)& \quad\hbox{ if }w= v.\end{cases}$$
Thus, $p_v\in\Lipko$, and so $\mathcal{P}$ is a subspace of $\Lipko$.

Let $f\in\Lipko$ and for $n\in\N$, define
$$f_n(v)=\begin{cases} f(v)& \quad\hbox{ if }|v|\le n,\\
f(v_n)&\quad\hbox{ if }|v|>n,\end{cases}$$
where $v_n$ is the ancestor of $v$ of length $n$. Observe that for $v\in T^*$,
\ben \chi_v=p_v-\sum_{w\in [v^+]}p_w,\label{chi_v}\eeqn
where $[v^+]$ denotes the set of children of $v$.
Therefore, for $n\in\N$, we have
\ben f_n&=&\sum_{|v|<n}f(v)\c_v +\sum_{|v|=n}f(v)p_v\nonumber\\
&=&\sum_{|v|<n}f(v)\left(p_v-\sum_{w\in [v^+]}p_w\right) +\sum_{|v|=n}f(v)p_v\nonumber\\
&=&\sum_{|v|\le n}f(v)p_v-\sum_{|v|<n}f(v)\sum_{w\in [v^+]}p_w.\nonumber\eeqn
Thus, $f_n$ is a finite linear combination of the functions $p_v$ and
$$\knorm{f_n-f}=\sup_{|v|>n}|v|\prod_{j=0}^{k-1}\ell_j(|v|)Df(v)\to 0$$
as $n\to\infty$.

Next observe that $\Q[i]=\{z\in\C: {\rm{Re}}\, z, {\rm{Im}}\, z\in\Q\}$ is dense in $\C$, and $T$ is countable. Thus, the subset of $\mathcal{P}$ consisting of the finite linear combinations of the functions $p_v$ with coefficients in $\Q[i]$ is countable and dense in $\Lipko$. Therefore, $\Lipko$ is a closed separable subspace of $\Lipk$.\end{proof}

%%%%%%%%%%%%%%%%%%%%%%%%%%%%%%%%%%%%%%%%%%%%%%%%%%%%%%%%%%%%%%%%%%%%%%%%%%%%%%%%%%%%%%%%%%%%%
The following result will be used in Section~\ref{ess_norm} to derive estimates on the essential norm of the multiplication operators on $\Lipk$.

\begin{proposition}\label{weakconv} Let $\{f_n\}$ be a sequence of functions in \rm{$\Lipko$} converging to $0$ pointwise in $T$ and such that \rm{$\knorm{f_n}$} is bounded. Then $f_n\to 0$ weakly in \rm{$\Lipko$}.
\end{proposition}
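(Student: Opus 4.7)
The plan is to prove weak convergence by embedding $\Lipko$ isometrically into a $c_0$-type sequence space, on which bounded pointwise-null sequences are known to converge weakly to zero.

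First, I would introduce the map $\Phi:\Lipko\to c_0(T)$ defined by setting $\Phi(f)(o)=f(o)$ and $\Phi(f)(v)=|v|\prod_{j=0}^{k-1}\ell_j(|v|)Df(v)$ for $v\in T^*$. Since $T$ is locally finite and rooted, $T$ is countable, and the defining condition of $\Lipko$ says exactly that $\Phi(f)(v)\to 0$ as $|v|\to\infty$, so $\Phi(f)\in c_0(T)$. By the definition of $\knorm{\cdot}$, the map $\Phi$ is an isometry onto its range $Y:=\Phi(\Lipko)\subseteq c_0(T)$. Because $\Lipko$ is complete (it is a closed subspace of the Banach space $\Lipk$ by Proposition~\ref{dense}), $Y$ is a closed subspace of $c_0(T)$.

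Next I would translate the hypothesis through $\Phi$. Pointwise convergence $f_n(v)\to 0$ for every $v\in T$ implies $Df_n(v)=|f_n(v)-f_n(v^-)|\to 0$ for every $v\in T^*$, and therefore $\Phi(f_n)(v)\to 0$ pointwise on $T$. Moreover $\|\Phi(f_n)\|_\infty=\knorm{f_n}$ is bounded by hypothesis. So the problem reduces to the known fact that, in $c_0$, a bounded sequence converging coordinatewise to $0$ converges weakly to $0$.

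To conclude, let $\Lambda$ be any bounded linear functional on $\Lipko$. Then $\Lambda\circ\Phi^{-1}$ is bounded on $Y$; by Hahn--Banach it extends to a bounded linear functional on $c_0(T)$, which (since $c_0(T)^*\cong\ell^1(T)$) is given by some $\mu=(\mu_v)_{v\in T}\in\ell^1(T)$. Thus
\begin{equation*}
\Lambda(f_n)=\sum_{v\in T}\mu_v\,\Phi(f_n)(v).
\end{equation*}
The integrand satisfies $|\mu_v\Phi(f_n)(v)|\le \|\Phi(f_n)\|_\infty |\mu_v|\le M|\mu_v|$ with $M=\sup_n\knorm{f_n}<\infty$, and $\Phi(f_n)(v)\to 0$ for each $v$. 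By the dominated convergence theorem for the counting measure on $T$ against $\mu\in\ell^1(T)$, we conclude $\Lambda(f_n)\to 0$. As $\Lambda$ was arbitrary, $f_n\to 0$ weakly in $\Lipko$.

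The main obstacle, such as it is, is the Hahn--Banach extension step together with the identification $c_0(T)^*\cong\ell^1(T)$ for a countable index set; the rest is essentially unpacking definitions and applying dominated convergence. No estimates involving the weights $\ell_j$ are needed beyond what is already built into the definition of the norm.
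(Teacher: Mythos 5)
Your proof follows essentially the same route as the paper's: both identify $\Lipko$ with (a subspace of) $c_0$ indexed by $T$, invoke $c_0^*\cong\ell^1$, and conclude from the fact that a bounded, coordinatewise-null sequence pairs to zero against every $\ell^1$ element (the paper carries out that last step by an explicit tail estimate rather than dominated convergence, and handles the root value by subtracting $f_n(o)$ rather than folding it into the sequence; these are cosmetic differences). One slip you should repair: as written, $\Phi(f)(v)=|v|\prod_{j=0}^{k-1}\ell_j(|v|)Df(v)$ is built from $Df(v)=|f(v)-f(v^-)|$, so $\Phi$ is neither linear nor injective (e.g.\ $\Phi(f)=\Phi(-f)$), and the expression $\Lambda\circ\Phi^{-1}$ is not literally meaningful; the fix is to use the signed difference $f(v)-f(v^-)$ in place of $Df(v)$, which is what the paper's correspondence $f\mapsto \mu Df$ implicitly intends as well. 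A second, harmless inaccuracy: $\|\Phi(f)\|_\infty$ is the maximum of $|f(o)|$ and $\sup_{v\in T^*}|v|\prod_{j=0}^{k-1}\ell_j(|v|)Df(v)$, whereas $\knorm{f}$ is their sum, so your $\Phi$ is an isomorphism onto its range rather than an isometry; since weak convergence and boundedness are invariant under isomorphism, the argument is unaffected.
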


\begin{proof} First suppose $f_n(o)=0$ for all $n\in\N$, so that
$$\|f_n\|_k=\sup\limits_{v\in T^*}|v|\prod_{j=0}^{k-1}\ell_j(|v|)Df_n(v).$$ Then, letting $\m(v)=|v|\prod_{j=0}^{k-1}\ell_j(|v|)$ for $v\in T$, the sequence $\{\m Df_n\}$ converges to 0 pointwise. Observe that the subspace of $\Lipko$ whose elements send $o$ to 0 is isomorphic to the space $c_0$, consisting of the sequences indexed by $T$ which vanish at infinity, under the supremum norm via the correspondence $f\mapsto \m Df$. The space $c_0$ has dual isomorphic to the space $\ell^1$ of absolutely summable sequences (e.g. \cite{Conway:07}) via the correspondence $g\in \ell^1\mapsto \widetilde{g}\in c_0^*$, where for $f\in c_0$, $\widetilde{g}(f)=\sum_{v\in T}f(v)g(v).$  Thus, under the identification of $\Lipko$ with $c_0$, if $f_n\in c_0$ converges pointwise to 0 and is bounded in $c_0$, then for any $g\in \ell^1$, we have
\ben |\widetilde{g}(f_n)|=\left|\sum_{v\in T}f_n(v)g(v)\right|\le \sum_{v\in T}|f_n(v)||g(v)|.\label{estgfn}\eeqn
Let $c=\sup\limits_{n\in \N,v\in T}|f_n(v)|$. Fixing any positive integer $N$, we may split the sum on the right-hand side of (\ref{estgfn}) into the two sums $$S_1(n,N)=\sum_{|v|\le N}|f_n(v)||g(v)|\ \hbox{ and }\ S_2(n,N)=\sum_{|v|> N}|f_n(v)||g(v)|.$$ Since $f_n\to 0$ uniformly on the set $\{v\in T: |v|\le N\}$, we see that
$$S_1(n,N)\le \max_{|v|\le N}|f_n(v)|\|g\|_1\to 0,\hbox{ as }n\to\infty.$$
On the other hand, since $g\in \ell^1$, the tail of the series $\displaystyle\sum_{v\in T}|g(v)|$ approaches 0.
Therefore, from the inequalities
$$\lim_{n\to\infty}|\widetilde{g}(f_n)|\le \lim_{n\to\infty}S_1(n,N)+\sup_{n\in\N}S_2(n,N)\le c\sum_{|v|>N}|g(v)|,$$ letting $N\to \infty$, we deduce that $\displaystyle\lim_{n\to\infty}\widetilde{g}(f_n)=0$.

Hence, if $f_n(o)=0$, then $f_n$ converges to 0 weakly. In the general case, define $F_n=f_n-f_n(o)$. By the previous part, $F_n\to 0$ weakly. Since $f_n(o)\to 0$, we conclude that $f_n\to 0$ weakly as well.\end{proof}
%%%%%%%%%%%%%%%%%%%%%%%%%%%%%%%%%%%%%%%%%%%%%%%%%%%%%%%%%%%%%%%%%%%%%%%%%%%%%%%%%%%%%%%%%%%%%%%%

\section{Boundedness and operator norm}\label{Bdd}

In this section, we characterize the symbol of the bounded multiplication operators on the iterated Lipschitz spaces and obtain estimates on the operator norm, which reduce to the  norm estimates in \cite{AllenColonnaEasley:10} in the case $k=1$.

\begin{theorem}\label{boundedness} For a function $\psi$ on $T$, the following statements are equivalent.
\begin{enumerate}
\item[\rm{(a)}] $M_\psi$ is bounded on $\Lipk$.
\item[\rm{(b)}] $M_\psi$ is bounded on $\Lipko$.
\item[\rm{(c)}] $\psi\in L^\infty\cap \Lip^{(k+1)}$.
\end{enumerate}
Furthermore, under the above conditions, the following estimates hold:
$$\max\{\|\psi\|_\infty,\|\psi\|_k\} \leq \|M_\psi\| \leq \|\psi\|_\infty + \sup_{v \in T^*} |v|\prod_{j=1}^k \ell_j(|v|)D\psi(v).$$
\end{theorem}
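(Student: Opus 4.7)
The plan is to prove the upper half of the norm inequality first, which simultaneously yields (c)$\Rightarrow$(a) and (after a little extra work) (c)$\Rightarrow$(b). For $f\in\Lipk$, start from
$$D(\psi f)(v) \le |\psi(v)|\,Df(v) + |f(v^-)|\,D\psi(v),$$
multiply by $|v|\prod_{j=0}^{k-1}\ell_j(|v|)$, and apply Proposition~\ref{modulus_est} together with the monotonicity of $\ell_k$ (so that $\ell_k(|v^-|)\le \ell_k(|v|)$) to the second term. After also estimating $|\psi(o)f(o)|\le \|\psi\|_\infty |f(o)|$, one obtains
$$\|M_\psi f\|_k \le \Big(\|\psi\|_\infty + \sup_{v\in T^*}|v|\prod_{j=1}^k\ell_j(|v|)D\psi(v)\Big)\|f\|_k.$$
For (c)$\Rightarrow$(b) I additionally verify $M_\psi(\Lipko)\subseteq\Lipko$: in the same decomposition the first piece vanishes at infinity since $f\in\Lipko$ and $\psi$ is bounded, while the second equals $\frac{|f(v^-)|}{\ell_k(|v|)}\cdot |v|\prod_{j=1}^k\ell_j(|v|)D\psi(v)$, and the first factor tends to zero by Proposition~\ref{lm0}.

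The lower estimate is essentially free. Lemma~\ref{drs} applied via Corollary~\ref{funct_Banach} (noting that $\Lipko$ inherits a functional Banach structure as a closed subspace of $\Lipk$) gives $\|\psi\|_\infty\le \|M_\psi\|$ in both cases, and testing against the constant function $1$, which lies in $\Lipko\subset\Lipk$ with $\|1\|_k=1$, yields $\|\psi\|_k = \|M_\psi 1\|_k \le \|M_\psi\|$.

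The main obstacle is the reverse implications (a)$\Rightarrow$(c) and (b)$\Rightarrow$(c), where I must extract the $\Lip^{(k+1)}$-seminorm of $\psi$ from the operator norm. For each $v\in T^*$ with $|v|\ge 2$, I test against
$$f_v(u)=\begin{cases}0 & u=o,\\ \ell_k(|u|) & 1\le |u|\le |v^-|,\\ \ell_k(|v^-|) & |u|>|v^-|.\end{cases}$$
Because $f_v$ is eventually constant, $f_v\in\Lipko$. Lemma~\ref{noteasy} rewrites $|u|\prod_{j=0}^{k-1}\ell_j(|u|)Df_v(u)$ as $\f_{k,|u|}+1\le \f_{k,2}+1$ on the (finite) support of $Df_v$, giving the uniform bound $\|f_v\|_k\le \f_{k,2}+1$. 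A direct computation yields $D(\psi f_v)(v)=\ell_k(|v^-|)D\psi(v)$, whence
$$\ell_k(|v|-1)\cdot |v|\prod_{j=0}^{k-1}\ell_j(|v|)D\psi(v)\le \|M_\psi f_v\|_k \le (\f_{k,2}+1)\|M_\psi\|.$$
By Lemma~\ref{easy}, $\ell_k(n)/\ell_k(n-1)$ is decreasing in $n$ and hence bounded above by $\ell_k(2)$; this upgrades the estimate to
$$|v|\prod_{j=1}^k\ell_j(|v|)D\psi(v)\le \ell_k(2)(\f_{k,2}+1)\|M_\psi\|$$
for all $|v|\ge 2$, while $|v|=1$ is handled via $D\psi(v)\le 2\|\psi\|_\infty\le 2\|M_\psi\|$. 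The delicate step is designing $f_v$ so that $\|f_v\|_k$ stays uniformly bounded while $f_v(v^-)=\ell_k(|v^-|)$, so that $M_\psi$ cleanly exposes $D\psi(v)$; the residual one-step shift from $\ell_k(|v|-1)$ to $\ell_k(|v|)$ is then absorbed by Lemma~\ref{easy}.
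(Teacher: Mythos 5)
Your proof is correct, and on the key necessity direction it takes a genuinely different route from the paper's. The sufficiency implications, the verification that $M_\psi$ preserves $\Lipko$, and both norm estimates are essentially the paper's arguments, up to a cosmetic change in how you split the product rule (you pair the increment of $f$ with $\psi(v)$ and the increment of $\psi$ with $f(v^-)$; the paper does the reverse, and both land on the same bound via Proposition~\ref{modulus_est}). The real divergence is in (a),(b)$\Rightarrow$(c). The paper tests against the unbounded function $v\mapsto\ell_k(|v|)$, which lies in $\Lipk$ but not in $\Lipko$, and so must run a second, separate argument for the little space using the family $(\ell_k(|\cdot|))^p$, $0<p<1$, followed by a limit as $p\to 1$; in both cases it then needs the reverse triangle inequality (\ref{prodrule2}) to peel off a $\|\psi\|_\infty\knorm{f}$ error term. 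Your truncations $f_v$ are eventually constant, hence lie in $\Lipko$, so a single family settles both implications simultaneously with no limiting argument; moreover, because $f_v$ is constant across the edge $[v^-,v]$, the quantity $D(\psi f_v)(v)$ equals $\ell_k(|v|-1)D\psi(v)$ exactly and no error term arises at all. The price is only the harmless uniform constants $\f_{k,2}+1$ and $\a_k(|v|)\le\ell_k(2)$ supplied by Lemmas~\ref{noteasy} and~\ref{easy}, which are irrelevant to the qualitative equivalence. The one point worth stating explicitly is that Corollary~\ref{funct_Banach} is phrased for $\Lipk$; your parenthetical that $\Lipko$ is itself a functional Banach space (point evaluations remain bounded by Proposition~\ref{modulus_est}, and the subspace is closed by Proposition~\ref{dense}) is exactly what licenses the appeal to Lemma~\ref{drs} there, and is the same move the paper makes implicitly.
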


\begin{proof} (a)$\Longrightarrow$(c):  Assume $M_\psi$ is bounded on $\Lipk$. The boundedness of $\psi$ is an immediate consequence of Corollary~\ref{funct_Banach}. To prove that $\psi\in\Lip^{(k+1)}$, for $v\in T$, define $$f(v)=\begin{cases}\ \ 0 &\quad\hbox{ if }v=o,\\
\ell_k(|v|)&\quad\hbox{ if }v\in T^*.\end{cases}$$  Then, for $|v|=1$, we have $Df(v)=1$, while for $|v|\ge 2$, by Lemma~\ref{noteasy}, $$|v|\prod_{j=0}^{k-1}\ell_{j}(|v|)Df(v)\le 2\prod_{j=0}^{k}\ell_{j}(2),$$
so $f\in\Lipk$. Furthermore, for $v\in T^*$ we have
\ben D\psi(v)f(v)&\le & |\psi(v)f(v)-\psi(v^-)f(v^-)|+|\psi(v^-)f(v^-)-\psi(v^-)f(v)|\nonumber\\
&= & D(\psi f)(v)+|\psi(v^-)|Df(v).\label{prodrule2}\eeqn
By the boundedness of $M_\psi$, for $v\in T^*$, we obtain
\ben &\,&\hskip-40pt \sup_{v\in T^*}|v|\prod_{j=0}^{k}\ell_j(|v|)D\psi(v)=|v|\prod_{j=0}^{k-1}\ell_j(|v|)D\psi(v)f(v)\nonumber\\&\,&\hskip66pt \le |v|\prod_{j=0}^{k-1}\ell_j(|v|)D(\psi f)(v)%\nonumber\\&\ &\quad
+|\psi(v^-)||v|\prod_{j=0}^{k-1}\ell_j(|v|)Df(v)\nonumber\\
&\,& \hskip66pt \le \knorm{M_\psi f}+\|\psi\|_\infty\knorm{f}.\nonumber\eeqn
Therefore $\psi\in \Lip^{(k+1)}$.

(c)$\Longrightarrow$(a):  Assume $\psi\in L^\infty\cap\Lip^{(k+1)}$. For $f\in\Lipk$ and $v\in T^*$, we have
\ben D(\psi f)(v)&\le & |\psi(v)f(v)-\psi(v^-)f(v)|+|\psi(v^-)f(v)-\psi(v^-)f(v^-)|\nonumber\\
&=& D\psi(v)|f(v)|+|\psi(v^-)|Df(v).\label{prodrule}\eeqn
Thus, using (\ref{prodrule}) and Proposition~\ref{modulus_est}, we obtain
\ben \|M_\psi f\|_k
&\le & |\psi(o)f(o)|+|v|\prod_{j=0}^{k-1}\ell_j(|v|)D\psi(v)|f(v)|\nonumber\\
&\ &\quad +|\psi(v^-)||v|\prod_{j=0}^{k-1}\ell_j(|v|)Df(v)\nonumber\\
&\le &\left[|v|\prod_{j=0}^{k}\ell_j(|v|)D\psi(v)+\|\psi\|_\infty\right]\knorm{f}\label{ester}\\
&\le &\left(\|\psi\|_{k+1}+\|\psi\|_\infty\right)\knorm{f}\nonumber\eeqn
Therefore $\psi f\in\Lipk$ and from (\ref{ester}) we obtain the upper estimate on the operator norm.

(b)$\Longrightarrow$(c):  Suppose $M_\psi$ is bounded on $\Lipko$. Again by Corollary~\ref{funct_Banach}, we see that $\psi\in L^\infty$. To show that $\psi\in \Lip^{(k+1)}$, for $0<p<1$, define the function $f_p$ on $T$ by
$$f_p(v)=\begin{cases} 0 &\quad\hbox{ if }v=o,\\(\ell_k(|v|))^p&\quad\hbox{ if }v\ne o.\end{cases}$$
Since $\displaystyle\lim_{n\to\infty}\frac{\left(\ell_k(n)\right)^p-\left(\ell_k(n-1)\right)^p}{\ell_k(n)-\ell_k(n-1)}=0,$ and by Lemma~\ref{noteasy},
$$\lim_{|v|\to \infty}|v|\prod_{j=0}^{k-1}\ell_j(|v|)\left(\ell_k(|v|)-\ell_k(|v|-1)\right)=1,$$ we deduce that $|v|\prod_{j=0}^{k-1}\ell_j(|v|)Df_p(v)\to 0$ as $|v|\to\infty$. Thus $f_p\in\Lipko$. Since for $0<p<1$, the function $x\mapsto x-x^p$ is increasing for $x\ge 1$, it follows that for $|v|\ge 2$, $Df_p(v)\le \ell_k(|v|)-\ell_k(|v|-1)$, so by Lemma~\ref{noteasy}, we have
$$ |v|\prod_{j=0}^{k-1}\ell_j(|v|)Df_p(v)\le |v|\prod_{j=0}^{k-1}\ell_j(|v|)(\ell_k(|v|)-\ell_k(|v|-1))\le 2\prod_{j=0}^{k}\ell_{j}(2).$$ Furthermore, for $|v|=1$, $|v|Df_p(v)=1$. Thus, over the interval $(0,1)$ the mapping $p\mapsto \knorm{f_p}$ is bounded above. By (\ref{prodrule2}), for $v\in T^*$, we have
\ben &\,&\hskip-20pt|v|\prod_{j=0}^{k-1}\ell_j(|v|)D\psi(v)f_p(v)\le |v|\prod_{j=0}^{k-1}\ell_j(|v|)D(\psi f_p)(v)
 +|v|\prod_{j=0}^{k-1}\ell_j(|v|)|\psi(v^-)|D f_p(v)\nonumber\\
&\,&\hskip90pt\le \knorm{M_\psi f_p}+\|\psi\|_\infty\knorm{f_p}.\nonumber\eeqn
This implies that $$|v|\prod_{j=0}^{k-1}\ell_j(|v|)D\psi(v)(\ell_k(|v|))^p\le (\|M_\psi\|+\|\psi\|_\infty)\knorm{f_p}.$$
Letting $p$ approach 1, by the boundedness of $\knorm{f_p}$, we obtain $\psi\in \Lip^{(k+1)}$.

(c)$\Longrightarrow$(b):  Assume $\psi\in L^\infty\cap\Lip^{(k+1)}$ and let $f\in\Lipko$. By (\ref{prodrule}) and Proposition~\ref{lm0}, for $|v|>1$ we have
\small \ben |v|\prod_{j=0}^{k-1}\ell_j(|v|)D(\psi f)(v)&\le &|v|\prod_{j=0}^{k-1}\ell_j(|v|)D\psi(v)|f(v)|+|v|\prod_{j=0}^{k-1}\ell_j(|v|)|\psi(v^-)|Df(v)\nonumber\\
&\le &|v|\prod_{j=0}^{k}\ell_j(|v|)D\psi(v)\frac{|f(v)|}{\ell_k(|v|)}+\|\psi\|_\infty|v|\prod_{j=0}^{k-1}\ell_j(|v|)Df(v)\to 0\nonumber\eeqn \normalsize
as $|v|\to\infty$. Therefore, $\psi f\in\Lipko$. The boundedness of $M_\psi$ on $\Lipko$ follows from Lemma~\ref{drs}.

Finally, the lower estimate on the operator norm follows at once by applying Corollary \ref{funct_Banach} and by taking as test function the constant 1.\end{proof}

\section{Spectrum of $M_\psi$}\label{Section:Spectrum}
In this section, we study the spectra of the bounded multiplication operator $M_\psi$ on $\Lip^{(k)}$ and $\Lip_0^{(k)}$ and characterize those operators that are bounded below.

Denote by $\sigma(S)$, $\sigma_p(S)$, and $\sigma_{ap}(S)$ the spectrum, the point spectrum, and the approximate point spectrum, respectively, of an operator $S$ on a Banach space $X$.
Recall that $\s(S)$ is nonempty and closed, and the following inclusions hold \cite{Conway:07}:
\ben \s_p(S)\subseteq \s_{ap}(S)\subseteq\s(S).\label{spectra}\eeqn

\begin{proposition}[Proposition 6.7 of \cite{Conway:07}]\label{boundary spectrum} If $S$ is a bounded operator on a Banach space, then the boundary of $\s(S)$ is a subset of $\s_{ap}(S)$. \end{proposition}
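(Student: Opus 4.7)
The plan is to show that every $\l \in \bd \s(S)$ admits a sequence $\{x_n\}$ of unit vectors such that $(S - \l I)x_n \to 0$, which is exactly the condition $\l \in \s_{ap}(S)$.

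First, since $\l$ lies on the boundary of $\s(S)$, I would pick a sequence $\{\l_n\}$ in the resolvent set $\C \setminus \s(S)$ converging to $\l$. For each $n$ the operator $S - \l_n I$ is invertible, so $R_n := (S - \l_n I)^{-1}$ is a well-defined bounded operator on the Banach space.

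The key step is to establish $\|R_n\| \to \infty$. I would argue by contradiction: if the sequence $\{\|R_n\|\}$ were bounded by some constant $M$, then choosing $n$ large enough that $|\l - \l_n| < 1/M$ and using the factorization
$$S - \l I = (S - \l_n I) - (\l - \l_n)I = (S - \l_n I)\bigl(I - (\l - \l_n)R_n\bigr),$$
a Neumann series argument on the second factor (whose distance to $I$ is strictly less than $1$ in norm) shows $S - \l I$ is invertible. This contradicts $\l \in \s(S)$. I expect this to be the main obstacle, since it is the only step where the geometry of the resolvent set is used in an essential way.

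With $\|R_n\| \to \infty$ in hand, I would select unit vectors $y_n$ with $\|R_n y_n\| \geq \frac{1}{2}\|R_n\|$ and set $x_n := R_n y_n / \|R_n y_n\|$. Then $\|x_n\| = 1$ and
$$\|(S - \l_n I) x_n\| = \frac{1}{\|R_n y_n\|} \leq \frac{2}{\|R_n\|} \to 0.$$
Finally, the decomposition $(S - \l I)x_n = (S - \l_n I)x_n + (\l_n - \l)x_n$ shows both terms vanish as $n \to \infty$, so $(S - \l I)x_n \to 0$ and hence $\l \in \s_{ap}(S)$. Since $\l$ was an arbitrary boundary point, this yields $\bd \s(S) \subseteq \s_{ap}(S)$.
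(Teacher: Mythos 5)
Your argument is correct and is the standard resolvent-blow-up proof of this fact; the paper itself gives no proof, quoting the result directly from Conway (Proposition 6.7), and your write-up matches that textbook argument step for step. The only cosmetic point is that the negation of $\|R_n\|\to\infty$ is the existence of a bounded subsequence rather than boundedness of the whole sequence, but passing to that subsequence leaves the rest of your proof unchanged.
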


\begin{theorem}\label{spectrum} Let $k \in \N$ and $M_\psi$ be a bounded multiplication operator on $\Lip^{(k)}$ or $\Lip_0^{(k)}$. Then
\begin{enumerate}
\item[\rm{(a)}]$\sigma_p(M_\psi) = \psi(T)$.
\item[\rm{(b)}]$\s(M_\psi) = \overline{\psi(T)}$.
\item[\rm{(c)}]$\s_{ap}(M_\psi) = \overline{\psi(T)}$.
\end{enumerate}
\end{theorem}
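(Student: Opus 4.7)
My plan is to prove (a) directly, deduce (b) by inverting $M_{\psi-\lambda}$ for $\lambda$ outside $\overline{\psi(T)}$, and then squeeze (c) between the point spectrum and the spectrum.

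For (a), the point spectrum, I would use the characteristic functions $\chi_v$ as test eigenfunctions. Because $\chi_v$ has finite support, $D\chi_v$ is supported on $\{v\} \cup [v^+]$, so $|w|\prod_{j=0}^{k-1}\ell_j(|w|)D\chi_v(w)$ vanishes outside this finite set; in particular $\chi_v \in \Lipko \subseteq \Lipk$ for every $v \in T$. Given $v_0 \in T$, set $\lambda = \psi(v_0)$; then $(\psi-\lambda)\chi_{v_0}\equiv 0$, so $\lambda \in \sigma_p(M_\psi)$ and $\psi(T)\subseteq \sigma_p(M_\psi)$. Conversely, if $M_\psi f = \lambda f$ with $f \not\equiv 0$, pick $v_0$ with $f(v_0)\ne 0$; then $(\psi(v_0)-\lambda)f(v_0)=0$ forces $\lambda = \psi(v_0) \in \psi(T)$.

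For (b), the inclusion $\overline{\psi(T)}\subseteq \sigma(M_\psi)$ is immediate from (a), (\ref{spectra}), and the closedness of $\sigma(M_\psi)$. For the reverse inclusion, fix $\lambda \notin \overline{\psi(T)}$, so that $c := \inf_{v\in T}|\psi(v)-\lambda| > 0$. Then $g := 1/(\psi-\lambda)$ is bounded by $1/c$, and
$$Dg(v) = \left|\frac{1}{\psi(v)-\lambda}-\frac{1}{\psi(v^-)-\lambda}\right| = \frac{D\psi(v)}{|\psi(v)-\lambda|\,|\psi(v^-)-\lambda|} \le \frac{D\psi(v)}{c^2}.$$
Since $\psi\in L^\infty\cap\Lip^{(k+1)}$ by Theorem~\ref{boundedness}, this estimate yields $g \in L^\infty\cap\Lip^{(k+1)}$, so by Theorem~\ref{boundedness} again $M_g$ is bounded on $\Lipk$ (and on $\Lipko$). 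Clearly $M_g$ and $M_{\psi-\lambda}$ are two-sided inverses, so $\lambda \notin \sigma(M_\psi)$.

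For (c), I would use that $\sigma_{ap}(M_\psi)$ is closed, together with (a) and (b): $\psi(T)\subseteq \sigma_p(M_\psi)\subseteq \sigma_{ap}(M_\psi)\subseteq \sigma(M_\psi) = \overline{\psi(T)}$ forces $\sigma_{ap}(M_\psi)=\overline{\psi(T)}$; Proposition~\ref{boundary spectrum} gives an alternative route via the boundary of the spectrum. The main technical point is the computation above showing that $1/(\psi-\lambda)$ inherits membership in $\Lip^{(k+1)}$ from $\psi$, since this lets us realize the resolvent as a multiplication operator of the required kind; everything else is formal manipulation of the inclusions among the three spectra.
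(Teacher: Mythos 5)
Your proposal is correct and follows essentially the same route as the paper: characteristic functions $\chi_v$ as eigenfunctions for (a), the $c^{-2}D\psi(v)$ estimate showing $(\psi-\lambda)^{-1}\in L^\infty\cap\Lip^{(k+1)}$ for (b), and squeezing $\sigma_{ap}$ between $\psi(T)$ and $\overline{\psi(T)}$ for (c). The only cosmetic difference is that in (c) you invoke closedness of $\sigma_{ap}$ directly, whereas the paper reaches the same conclusion via Proposition~\ref{boundary spectrum}; both are valid.
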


\begin{proof}  We will prove this result for $M_\psi$ bounded on $\Lip_0^{(k)}$.  The same argument holds for $M_\psi$ bounded on $\Lip^{(k)}$.

To prove (a), suppose $\lambda \in \sigma_p(M_\psi)$.  Then there exists a function $f \in \Lip_0^{(k)}$ that is not identically zero such that $M_{\psi-\lambda}f$ is identically zero.  Thus, there exists $v \in T$ such that $f(v) \neq 0$.  Then $0=(M_{\psi-\lambda}f)(v) = (\psi(v)-\lambda)f(v)$, and so $\psi(v) = \lambda$, proving that $\lambda$ is in the image of $\psi$.
Conversely, suppose $\lambda$ is in the image of $\psi$.  Then there exists $w \in T$ such that $\psi(w) = \lambda$.  So we see that $M_{\psi-\lambda}\chi_{w}$ is identically zero.  Thus $\lambda \in \sigma_p(M_\psi)$.  Therefore $\sigma_p(M_\psi)=\psi(T)$.

To prove (b), note that the inclusion $\overline{\psi(T)}\subseteq \s(M_\psi)$ follows at once from part (a) by passing to the closure.
Conversely, if $\l\notin \overline{\psi(T)}$, then there exists a positive constant $c$ such that $|\psi(v)-\l|\ge c$ for all $v\in T$. Thus, the function $g=(\psi-\l)^{-1}$ is bounded on $T$.  By the boundedness of $M_\psi$ on $\Lip_0^{(k)}$ and Theorem~\ref{boundedness}, it follows that $\psi\in L^\infty\cap \Lip^{(k+1)}$.  Therefore,
\ben \sup_{v\in T^*}|v|\prod_{j=1}^{k}\ell_j(|v|)Dg(v)&=&\sup_{v\in T^*}|v|\prod_{j=1}^{k}\ell_j(|v|)\left|\frac1{\psi(v)-\l}-\frac1{\psi(v^-)-\l}\right|\nonumber\\
&\le& \frac1{c^2}\sup_{v\in T^*}|v|\prod_{j=1}^{k}\ell_j(|v|)D\psi(v)<\infty.\nonumber\eeqn
Thus $g \in L^\infty\cap\Lip^{(k+1)}$, and so, by Theorem~\ref{boundedness}, the operator $M_g=M_{(\psi-\l)^{-1}}=M_{\psi-\l}^{-1}$ is bounded on $\Lip_0^{(k)}$. Hence $\l\notin \s(M_\psi)$. Therefore $\s(M_\psi)=\overline{\psi(T)}$.

To prove (c), we first note that by inclusion (\ref{spectra}) and part (b), we have $\s_{ap}(M_\psi) \subseteq \overline{\psi(T)}$.  Also, by inclusion (\ref{spectra}) and part (a), we have $\psi(T) \subseteq \s_{ap}(M_\psi)$.  Finally, by Proposition \ref{boundary spectrum}, the boundary of $\psi(T)$ is contained in $\s_{ap}(M_\psi)$.  Thus, we have $\overline{\psi(T)} \subseteq \s_{ap}(M_\psi)$, which means $\s_{ap}(M_\psi) = \overline{\psi(T)}$.
\end{proof}

A bounded operator $S$ on a Banach space $X$ is said to be \emph{bounded below} if there exists a positive constant $c$ such that $\|Sx\| \geq c\|x\|$ for all $x \in X$.  Note that a bounded operator that is bounded below is necessarily injective.

	The following result connects the approximate point spectrum and the operators that are bounded below.

\begin{proposition}[Proposition 6.4 of \cite{Conway:07}]\label{conway approximate point spectrum} If $S$ is a bounded operator on a Banach space, then $\lambda \not\in \sigma_{ap}(S)$ if and only if $S-\lambda I$ is bounded below.
\end{proposition}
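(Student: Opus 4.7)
The plan is to prove both directions by directly unwinding the definition of the approximate point spectrum. Recall that $\lambda \in \sigma_{ap}(S)$ means there exists a sequence of unit vectors $\{x_n\}$ in the underlying Banach space $X$ such that $\|(S-\lambda I)x_n\| \to 0$ as $n \to \infty$; equivalently, $\lambda \notin \sigma_{ap}(S)$ means there is some $c > 0$ with $\|(S-\lambda I)x\| \geq c$ for every unit vector $x$. So the entire statement essentially reduces to the observation that this uniform lower bound on unit vectors is the same as $S - \lambda I$ being bounded below, modulo a rescaling argument.

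First I would prove the contrapositive of the forward direction: assume $S - \lambda I$ is not bounded below. Then for every $n \in \N$ there exists a nonzero vector $y_n \in X$ with $\|(S-\lambda I)y_n\| < \frac{1}{n}\|y_n\|$. Setting $x_n = y_n/\|y_n\|$, I obtain a sequence of unit vectors satisfying $\|(S-\lambda I)x_n\| < 1/n$, so $\|(S-\lambda I)x_n\| \to 0$ and therefore $\lambda \in \sigma_{ap}(S)$. This proves that $\lambda \notin \sigma_{ap}(S)$ forces $S - \lambda I$ to be bounded below.

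For the converse, suppose $S - \lambda I$ is bounded below by a constant $c > 0$, meaning $\|(S-\lambda I)x\| \geq c\|x\|$ for every $x \in X$. For any unit vector $x$, this gives $\|(S-\lambda I)x\| \geq c$, so no sequence of unit vectors can drive $\|(S-\lambda I)x_n\|$ to zero. Hence $\lambda \notin \sigma_{ap}(S)$, completing the equivalence.

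There is no real obstacle here; the entire content is a direct translation between two equivalent quantitative statements, and the only mildly nontrivial step is the normalization $x_n = y_n/\|y_n\|$ that converts the witnesses of ``not bounded below'' into a sequence of unit vectors witnessing membership in the approximate point spectrum.
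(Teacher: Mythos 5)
Your proof is correct; the paper itself gives no proof of this statement, importing it directly as Proposition 6.4 of \cite{Conway:07}, and your argument is precisely the standard one found there: unwind the definition of $\sigma_{ap}(S)$ via unit vectors, with the normalization $x_n = y_n/\|y_n\|$ converting witnesses of failure of the lower bound into an approximate eigenvector sequence. Nothing further is needed.
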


	We next characterize the bounded multiplication operators on $\Lip^{(k)}$ or $\Lip_0^{(k)}$ which are bounded below.

\begin{theorem}\label{bounded below} If $M_\psi$ is a bounded multiplication operator on $\Lip^{(k)}$ or $\Lip_0^{(k)}$ for $k \in \N$, then $M_\psi$ is bounded below if and only if $\inf\limits_{v\in T}|\psi(v)|>0$.
\end{theorem}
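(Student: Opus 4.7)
The plan is to deduce the theorem immediately from the spectral results already established, by applying Proposition~\ref{conway approximate point spectrum} with $\l=0$. Since $M_\psi - 0\cdot I = M_\psi$, that proposition says $M_\psi$ is bounded below if and only if $0 \notin \sigma_{ap}(M_\psi)$. But part (c) of Theorem~\ref{spectrum} identifies $\sigma_{ap}(M_\psi) = \overline{\psi(T)}$, so $M_\psi$ is bounded below if and only if $0 \notin \overline{\psi(T)}$.

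The final step is the elementary observation that $0 \notin \overline{\psi(T)}$ is equivalent to the existence of a positive constant $c$ with $|\psi(v)| \geq c$ for all $v\in T$, which is in turn equivalent to $\inf_{v\in T}|\psi(v)|>0$. Combining these equivalences gives the theorem.

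Since both spectral ingredients (Theorem~\ref{spectrum} and Proposition~\ref{conway approximate point spectrum}) are already available and hold for both $\Lip^{(k)}$ and $\Lip_0^{(k)}$, there is no distinct obstacle: the proof is essentially a one-line consequence of the spectral picture, with the only matter of care being to apply the named Conway proposition correctly at the point $\l=0$ and to invoke the correct part of Theorem~\ref{spectrum}.
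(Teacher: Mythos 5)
Your proposal is correct and follows exactly the paper's own argument: apply Proposition~\ref{conway approximate point spectrum} at $\l=0$ to reduce bounded-below to $0\notin\s_{ap}(M_\psi)$, then use Theorem~\ref{spectrum}(c) to identify $\s_{ap}(M_\psi)=\overline{\psi(T)}$ and note that $0\notin\overline{\psi(T)}$ is the same as $\inf_{v\in T}|\psi(v)|>0$. Nothing is missing.
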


\begin{proof} By Proposition~\ref{conway approximate point spectrum}, if $M_\psi$ is a bounded operator on $\Lip^{(k)}$ or $\Lip_0^{(k)}$, then $M_\psi$ is bounded below if and only if $0\notin \s_{ap}(M_\psi)$. By Theorem \ref{spectrum}, this condition is equivalent to $0\notin \overline{\psi(T)}$, i.e. $\inf\limits_{v\in T}|\psi(v)|>0$.\end{proof}

\section{Isometries and zero divisors}\label{isometries}
In this section we extend Theorem~9.1 in \cite{AllenColonnaEasley:10} to the iterated logarithmic Lipschitz spaces.

\begin{theorem}\label{iso_constant} The only isometric multiplication operators on $\Lip^{(k)}$ or $\Lip_0^{(k)}$ are induced by the constant functions of modulus one.
\end{theorem}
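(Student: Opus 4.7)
The plan is to show that if $M_\psi$ is an isometry, then testing against two carefully chosen functions forces $\psi$ to be constant of modulus one, and the converse is immediate.

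First I would observe that the constant function $f \equiv 1$ lies in both $\Lip^{(k)}$ and $\Lip_0^{(k)}$ with $\|1\|_k = 1$, since $f(o)=1$ and $Df \equiv 0$. Applying the isometry condition to this function yields
\[
1 = \|M_\psi 1\|_k = \|\psi\|_k = |\psi(o)| + \sup_{v \in T^*}|v|\prod_{j=0}^{k-1}\ell_j(|v|)D\psi(v).
\]
In particular $|\psi(o)|\le 1$, a fact also available from Corollary~\ref{funct_Banach} since an isometry has operator norm $1$.

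Next I would plug in the characteristic function $\chi_o$ of the root. Because $D\chi_o(v)=1$ for $|v|=1$ and $D\chi_o(v)=0$ for $|v|\ge 2$, and because $\ell_j(1)=1$ for every $j\ge 0$, we compute $\|\chi_o\|_k = 1+1 = 2$. Since $\chi_o$ has finite support it lies in $\Lip_0^{(k)}$ as well. Now $M_\psi\chi_o = \psi(o)\chi_o$, so
\[
2 = \|M_\psi\chi_o\|_k = |\psi(o)|\,\|\chi_o\|_k = 2|\psi(o)|,
\]
forcing $|\psi(o)|=1$. Substituting back into the identity from the first step gives $\sup_{v\in T^*}|v|\prod_{j=0}^{k-1}\ell_j(|v|)D\psi(v)=0$, hence $D\psi(v)=0$ for every $v\in T^*$. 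Since $T$ is connected, $\psi$ is constant on $T$, and this constant has modulus one.

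The converse is routine: if $\psi \equiv c$ with $|c|=1$, then $M_\psi f = cf$, and the norm $\|\cdot\|_k$ is homogeneous, so $\|M_\psi f\|_k = |c|\,\|f\|_k = \|f\|_k$. There is no serious obstacle here; the only thing to be careful about is verifying that the two test functions $1$ and $\chi_o$ belong to the little space $\Lip_0^{(k)}$ so that the argument applies uniformly to both settings, which is immediate since both have $Df$ of finite support.
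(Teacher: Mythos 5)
Your proof is correct and follows essentially the same route as the paper's: test $M_\psi$ against the constant function $1$ to obtain $\|\psi\|_k=1$, and against a scalar multiple of $\chi_o$ to obtain $|\psi(o)|=1$, whence the supremum term in $\|\psi\|_k$ vanishes, $D\psi\equiv 0$, and $\psi$ is constant of modulus one. The only difference is that the paper also verifies $|\psi(v)|=1$ at every $v\in T^*$ using the test functions $(|v|+1)^{-1}\prod_{j=1}^{k-1}(\ell_j(|v|+1))^{-1}\chi_v$, a step your argument shows to be dispensable.
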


\begin{proof} It is clear that the constant functions of modulus one are symbols of isometric multiplication operators on $\Lip^{(k)}$ and $\Lip_0^{(k)}$. Thus, assume $M_\psi$ is an isometry on $\Lip^{(k)}$ or $\Lip_0^{(k)}$ so that, in particular,
\ben \|\psi\|_k= \|M_\psi 1\|_k = 1.\label{onenorm}\eeqn
First we are going to show that $\psi$ has constant modulus 1.  Fix $v\in T^*$ and let $f_v=\displaystyle(|v|+1)^{-1}\displaystyle\prod_{j=1}^{k-1}(\ell_j(|v|+1))^{-1}\c_v$. Then $|\psi(v)|=\|M_\psi f_v\|_k =\|f_v\|_k=  1$. On the other hand, for $g=\frac12\chi_o$, we have
$|\psi(o)|=\|M_\psi g\|_k=\|g\|_k=1.\,$
 Hence, $|\psi(v)|=1$ for all $v\in T$. From (\ref{onenorm}) it follows that $D\psi$ must be identically 0. Therefore, $\psi$ is a constant function of modulus one.\end{proof}

\begin{definition}\label{zero_divisor} Let $X$ be a Banach space of functions defined on a tree $T$ and let $Z$ be a nonempty subset of $T$. A function $\psi\in X$ is called a \emph{zero divisor} for $Z$ if it vanishes precisely at the vertices in $Z$ and $g/\psi\in X$ for every $g\in X$ vanishing on $Z$. The function $\psi$ is said to be an \emph{isometric zero divisor} if $\|g/\psi\|=\|g\|$ for each such function $g$.
\end{definition}

Recalling the set $\mathcal{P}$ in Proposition~\ref{dense}, it was shown in \cite{AllenColonnaEasley:10} that under certain hypotheses on the space $X$, the isometric zero divisors induce isometric multiplication operators on $X$.

\begin{theorem}[Theorem 9.3 of \cite{AllenColonnaEasley:10}]\label{zero_divisor_iso} Let $X$ be a functional Banach space on $T$ containing $\mathcal{P}$ and satisfying the following properties:
\begin{enumerate}
\item[\rm{(a)}] $\mathcal{P}$ is dense in $X$.
\item[\rm{(b)}] For each $v\in T$ and each $f\in X$, $p_vf\in X$, where $p_v=\chi_{S_v}$.
\end{enumerate}
If $\psi\in X$ is an isometric zero divisor, then $M_\psi$ is an isometry on $X$.\end{theorem}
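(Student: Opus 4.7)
The plan is to establish the isometry first on the dense subspace $\mathcal{P}$ and then transfer it to all of $X$ by a density/continuity argument. Since $\mathcal{P}$ is dense and $X$ is a functional Banach space, this two-stage strategy will automatically take care of boundedness of $M_\psi$ along the way.

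The first task is to verify that $M_\psi$ maps $\mathcal{P}$ into $X$. Given $f=\sum_{j=1}^{N}a_j p_{v_j}\in\mathcal{P}$, hypothesis (b) applied with the function $\psi\in X$ yields $\psi p_{v_j}\in X$ for each $j$, and hence $\psi f=\sum_{j=1}^{N}a_j(\psi p_{v_j})\in X$ by linearity. So $M_\psi(\mathcal{P})\subseteq X$.

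The core step is to show $\|\psi f\|_X=\|f\|_X$ for every $f\in\mathcal{P}$. Fix such an $f$ and set $g:=\psi f\in X$. Since $\psi$ vanishes precisely on $Z$, so does $g$. The isometric zero divisor property, applied to $g$, produces $g/\psi\in X$ with $\|g/\psi\|_X=\|g\|_X$. On $T\setminus Z$ the quotient $g/\psi$ coincides with $f$, and since $f$ itself satisfies $\psi\cdot f=g$ on all of $T$ it is a legitimate realization of this symbolic division; identifying $g/\psi$ with $f$ as elements of $X$ yields $\|f\|_X=\|\psi f\|_X$. This shows that $M_\psi|_{\mathcal{P}}$ is a linear isometry.

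Finally I would extend by density. Hypothesis (a) gives, for each $f\in X$, a sequence $f_n\in\mathcal{P}$ with $f_n\to f$ in $X$. The restriction $M_\psi|_{\mathcal{P}}$ is a bounded linear operator of norm $1$, so it admits a unique bounded linear extension $\widetilde{M}:X\to X$ which is itself an isometry. Because $X$ is a functional Banach space, $f_n(v)\to f(v)$ and $(\widetilde{M}f_n)(v)\to(\widetilde{M}f)(v)$ for every $v\in T$. But $(\widetilde{M}f_n)(v)=\psi(v)f_n(v)\to\psi(v)f(v)=(M_\psi f)(v)$, so $\widetilde{M}f=M_\psi f$ pointwise, hence $M_\psi f\in X$ and $\|M_\psi f\|_X=\lim_n\|f_n\|_X=\|f\|_X$. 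The main delicate point is the identification $g/\psi=f$ in the middle step: off $Z$ it is automatic, but on $Z$ it rests on the convention used in the definition of $g/\psi$, and one must argue that the representative of $g/\psi$ produced by the isometric zero divisor property is indeed the function $f$ (or at least has the same $X$-norm). Once this identification is secured, the rest of the argument is routine.
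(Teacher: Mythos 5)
This theorem is quoted from \cite{AllenColonnaEasley:10} and the present paper gives no proof of it, so there is nothing internal to compare against; your argument is nonetheless correct and is essentially the standard (and, as far as I can tell, the original) route: verify $M_\psi(\mathcal{P})\subseteq X$ via hypothesis (b), obtain $\|\psi f\|=\|f\|$ on $\mathcal{P}$ from the identification $\frac{\psi f}{\psi}=f$ together with the isometric zero-divisor property, and then extend by continuity using the boundedness of point evaluations to conclude that the extension coincides with $M_\psi$ on all of $X$. The one point you flag as delicate --- whether the representative of $\frac{\psi f}{\psi}$ produced by the zero-divisor property really is $f$ --- is settled by the convention the authors themselves adopt in the proof of Theorem~\ref{no_iso_zero_main}, where they write $\|p_v\|_k=\left\|\frac{\psi p_v}{\psi}\right\|_k=\|\psi p_v\|_k$ without further comment; under that reading your middle step is exactly their computation.
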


\begin{lemma}\label{Lipko satisfies conditions} For every $k \in \N$, $\Lipko$ is a functional Banach space on $T$ containing $\mathcal{P}$ and satisfying the following properties:
\begin{enumerate}
\item[\rm{(a)}] $\mathcal{P}$ is dense in $\Lipko$.
\item[\rm{(b)}] For each $v\in T$ and each $f\in \Lipko$, $p_vf\in \Lipko$.
\end{enumerate}
\end{lemma}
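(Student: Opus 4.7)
The functional-Banach structure of $\Lipko$ and the inclusion $\mathcal{P}\subseteq\Lipko$ should require no new work: since $\Lipk$ is a functional Banach space by Corollary~\ref{funct_Banach} and $\Lipko$ is a closed subspace by Proposition~\ref{dense}, every point-evaluation functional restricts to a bounded functional on $\Lipko$. Moreover, the proof of Proposition~\ref{dense} already exhibits each $p_v$ as an element of $\Lipko$, and part (a) is literally the density statement of Proposition~\ref{dense}. The plan is therefore to cite these two results for everything except part (b) and to concentrate the argument on showing that $\Lipko$ is invariant under multiplication by the functions $p_v$.

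For part (b), I would fix $v\in T$ and $f\in\Lipko$, and analyze $D(p_v f)(w)$ on $T^*$ by a short case split on the positions of $w$ and $w^-$ relative to the sector $S_v$. The case $v=o$ is trivial because $p_o\equiv 1$. For $v\ne o$, the key structural observation is that any child of a descendant of $v$ is itself a descendant of $v$, so the configuration $w\notin S_v$, $w^-\in S_v$ never arises, while the only $w\in S_v$ with $w^-\notin S_v$ is $w=v$ itself. Hence only three possibilities occur: both $w,w^-\notin S_v$, giving $D(p_v f)(w)=0$; or $w\in S_v\setminus\{v\}$, which forces $w^-\in S_v$ and gives $D(p_v f)(w)=Df(w)$; or $w=v$, giving $D(p_v f)(v)=|f(v)|$.

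It then remains to read off the two conditions defining $\Lipko$. In the third case the weighted derivative contributes the single finite quantity $|v|\prod_{j=0}^{k-1}\ell_j(|v|)|f(v)|$; in the first it vanishes identically; in the second it coincides with $|w|\prod_{j=0}^{k-1}\ell_j(|w|)Df(w)$, which is bounded because $f\in\Lipk$ and tends to $0$ as $|w|\to\infty$ because $f\in\Lipko$. Consequently $p_v f\in\Lipko$, completing part (b). The main obstacle is purely combinatorial bookkeeping around the three cases — identifying them correctly and recognizing that the ``jump'' case $w=v$ is a single vertex and thus harmless — since no further analytic estimates are needed beyond those already carried out in Propositions~\ref{modulus_est} and~\ref{dense}.
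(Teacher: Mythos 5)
Your proposal is correct and follows the same route as the paper: the functional Banach space structure and density are cited from Corollary~\ref{funct_Banach} and Proposition~\ref{dense} exactly as the paper does, and your three-case analysis of $D(p_v f)$ is just an explicit write-out of what the paper dismisses as ``Property (b) is immediate.'' The case split is accurate (in particular, the observation that $w\in S_v$, $w\ne v$ forces $w^-\in S_v$, and that the jump at $w=v$ is a single vertex), so no gap remains.
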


\begin{proof}  By Corollary~\ref{funct_Banach}, $\Lipko$ is a functional Banach space, and by Proposition \ref{dense}, $\mathcal{P}$ is dense in $\Lipko$. Property (b) is immediate.
\end{proof}

We now turn our attention to the existence of isometric zero divisors on $\Lipko$.

\begin{corollary}\label{no_iso_zero} The space \rm{$\Lipko$} has no isometric zero divisors for each $k \in \N$.
\end{corollary}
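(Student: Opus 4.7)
The plan is to combine three ingredients already established in the paper: Theorem~\ref{zero_divisor_iso}, Lemma~\ref{Lipko satisfies conditions}, and Theorem~\ref{iso_constant}. The idea is to argue by contradiction: suppose $\psi\in\Lipko$ were an isometric zero divisor for some nonempty $Z\subseteq T$. By Lemma~\ref{Lipko satisfies conditions}, $\Lipko$ is a functional Banach space containing $\mathcal{P}$ satisfying the two hypotheses of Theorem~\ref{zero_divisor_iso}. Hence that theorem immediately gives that $M_\psi$ is an isometry on $\Lipko$.

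Next, invoke Theorem~\ref{iso_constant}, which asserts that every isometric multiplication operator on $\Lipko$ is induced by a constant function of modulus one. Thus $\psi$ must be a constant of modulus one, and in particular $\psi(v)\ne 0$ for every $v\in T$.

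This directly contradicts Definition~\ref{zero_divisor}: a zero divisor for $Z$ is required to vanish precisely on the nonempty set $Z$, while a unimodular constant vanishes nowhere. Therefore no such $\psi$ can exist, which is what was to be shown.

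The argument is essentially a three-line chain of implications; there is no serious obstacle, since every nontrivial piece has already been done. The only bookkeeping point worth noting in the write-up is that the definition of zero divisor presupposes that $Z$ is nonempty, which is what makes the contradiction with $|\psi|\equiv 1$ genuine.

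\begin{proof}
Suppose, toward a contradiction, that $\psi\in\Lipko$ is an isometric zero divisor for some nonempty subset $Z$ of $T$. By Lemma~\ref{Lipko satisfies conditions}, $\Lipko$ is a functional Banach space on $T$ containing $\mathcal{P}$ and satisfying conditions (a) and (b) of Theorem~\ref{zero_divisor_iso}. Applying that theorem, we conclude that $M_\psi$ is an isometry on $\Lipko$. By Theorem~\ref{iso_constant}, $\psi$ must then be a constant function of modulus one, so $\psi(v)\ne 0$ for every $v\in T$. This contradicts the fact that a zero divisor for the nonempty set $Z$ must vanish at each vertex of $Z$. Therefore $\Lipko$ admits no isometric zero divisors.
\end{proof}
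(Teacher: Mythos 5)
Your proposal is correct and follows exactly the same route as the paper's own proof: Lemma~\ref{Lipko satisfies conditions} plus Theorem~\ref{zero_divisor_iso} force $M_\psi$ to be an isometry, Theorem~\ref{iso_constant} forces $\psi$ to be a unimodular constant, and this contradicts vanishing on the nonempty set $Z$. No gaps.
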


\begin{proof}  Assume $\psi$ is an isometric zero divisor of $\Lipko$.  Thus there exists a non-empty set $Z$ on which $\psi$ vanishes.  Then by Lemma \ref{Lipko satisfies conditions} and Theorem \ref{zero_divisor_iso}, $M_\psi$ is an isometry on $\Lipko$. However, by Theorem~\ref{iso_constant}, $\psi$ is a constant modulus 1 and hence nonvanishing, a contradiction.
\end{proof}

\begin{theorem}\label{no_iso_zero_main} The space \rm{$\Lipk$} has no isometric zero divisors for each $k \in \N$.
\end{theorem}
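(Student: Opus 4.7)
The plan is to show that an isometric zero divisor on $\Lipk$ must also be an isometric zero divisor on $\Lipko$, contradicting Corollary~\ref{no_iso_zero}. So suppose $\psi \in \Lipk$ is an isometric zero divisor vanishing precisely on a nonempty set $Z \subseteq T$, and throughout write $A(n) = n\prod_{j=0}^{k-1}\ell_j(n)$.

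First I apply the isometric property with the test functions $\chi_v$, $v \in T \setminus Z$: each lies in $\Lipko$ and vanishes on $Z$, and a direct computation gives $\|\chi_v/\psi\|_k = \|\chi_v\|_k/|\psi(v)|$, forcing $|\psi(v)| = 1$ for every $v \notin Z$. Consequently, at each \emph{boundary vertex} $v$ (meaning $v \in T^*$ with exactly one of $v, v^-$ in $Z$) one has $D\psi(v) = 1$, and since $\psi \in \Lipk$ this forces $A(|v|) \le \|\psi\|_k$ on the boundary. Because $A$ is increasing to $\infty$, the boundary of $Z$ lies in a finite ball about $o$.

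The crux is to bootstrap $\psi$ from $\Lipk$ to $\Lip^{(k+1)}$. I set $g(v) = \ell_k(|v|)$ for $v \in T^* \setminus Z$ and $g(v) = 0$ elsewhere. Lemma~\ref{noteasy} bounds $A(|v|) Dg(v)$ on the interior, and jump contributions only occur on the finite boundary, so $g \in \Lipk$ with $g|_Z = 0$. The isometric zero divisor hypothesis then yields $h := g/\psi \in \Lipk$ with $\|h\|_k = \|g\|_k$, where $h(v) = \ell_k(|v|)\overline{\psi(v)}$ on $T^* \setminus Z$. The Abel-type decomposition
\[h(v) - h(v^-) = \ell_k(|v|)\bigl(\overline{\psi(v)} - \overline{\psi(v^-)}\bigr) + \bigl(\ell_k(|v|) - \ell_k(|v^-|)\bigr)\overline{\psi(v^-)},\]
the reverse triangle inequality, multiplication by $A(|v|)$, and Lemma~\ref{noteasy} together yield
\[|v|\prod_{j=0}^{k}\ell_j(|v|)\, D\psi(v) \le A(|v|)\, Dh(v) + \f_{k,|v|} + 1 \le \|h\|_k + \f_{k,2} + 1\]
for every $v, v^- \in T^* \setminus Z$ with $|v|\ge 2$. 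The remaining exceptional vertices lie in the finite boundary or satisfy $D\psi(v) = 0$, so $\psi \in \Lip^{(k+1)}$; dividing by $\ell_k(|v|)$ and using $\ell_k(|v|) \to \infty$ then gives $\psi \in \Lipko$.

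Finally, for any $g \in \Lipko$ with $g|_Z = 0$ the hypothesis already delivers $h := g/\psi \in \Lipk$ with $\|h\|_k = \|g\|_k$, and it remains to show $h \in \Lipko$. The quotient identity together with $|\psi|=1$ off $Z$ gives
\[Dh(v) \le Dg(v) + |g(v^-)|\, D\psi(v)\]
for $v, v^- \notin Z$, while $Dh(v) = Dg(v)$ at the remaining vertices. After multiplication by $A(|v|)$ the first term tends to $0$ since $g \in \Lipko$; for the second, combine $|g(v^-)|/\ell_k(|v^-|) \to 0$ from Proposition~\ref{lm0} with the uniform bound $\ell_k(|v|) A(|v|) D\psi(v) \le \|\psi\|_{k+1}$ from the previous step to see it also vanishes. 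Thus $h \in \Lipko$, $\psi$ is an isometric zero divisor on $\Lipko$, and Corollary~\ref{no_iso_zero} is contradicted. The real obstacle is the bootstrapping in the third paragraph: the specifically chosen test function of growth exactly $\ell_k$ is what converts the a priori hypothesis $\psi \in \Lipk$ into the stronger $\psi \in \Lip^{(k+1)}$ needed both to place $\psi$ in $\Lipko$ and to control the cross-term $|g(v^-)| D\psi(v)$ in the final estimate.
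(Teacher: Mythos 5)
Your argument is correct, and while it shares the paper's overall strategy---bootstrap $\psi$ from $\Lipk$ into $\Lip^{(k+1)}$ via a test function of growth $\ell_k$, conclude $\psi\in\Lipko$ because $\ell_k(|v|)\to\infty$, and then contradict the little-space result---the mechanics are genuinely different. The paper never divides by $\psi$: it exploits the fact that $\psi f$ automatically vanishes on $Z$ for \emph{every} $f$, so that $\knorm{\psi f}=\knorm{f}$ for all $f\in\mathcal{P}$, and then feeds in the normalized, truncated test functions $f_w$ and $g_w$ (constant beyond level $|w|$), extracting the bound on $\sup_{2\le|v|\le|w|}$ and letting $|w|\to\infty$; this yields only $\|\psi\|_\infty\le 1$ and sidesteps any analysis of the set $Z$. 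You instead work with the quotients $g/\psi$ directly, which buys you the stronger fact $|\psi|\equiv 1$ off $Z$ (via the $\chi_v$ tests) and forces you to first localize the boundary of $Z$ in a finite ball so that your single unbounded test function $\ell_k(|\cdot|)\chi_{T^*\setminus Z}$ lies in $\Lipk$; the reverse-triangle/Abel decomposition of $Dh$ then plays the role of the paper's inequality (\ref{fact2}). A genuine bonus of your route is the last paragraph: the paper's closing citation of Corollary~\ref{no_iso_zero} tacitly requires that $\psi$ be an isometric zero divisor \emph{of} $\Lipko$, i.e.\ that $g/\psi\in\Lipko$ whenever $g\in\Lipko$ vanishes on $Z$, and you supply exactly this verification (using Proposition~\ref{lm0} and the $\Lip^{(k+1)}$ bound to kill the cross term $|g(v^-)|D\psi(v)$), whereas the paper leaves it implicit. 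Two trivial points you should still acknowledge: the degenerate case $Z=T$ (i.e.\ $\psi\equiv 0$) must be excluded for $g/\psi$ to make sense at all, and the vertices with $|v|=1$ and $v,o\notin Z$ are neither boundary vertices nor have $D\psi(v)=0$, though they contribute at most $2$ to the supremum defining $\|\psi\|_{k+1}$.
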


\begin{proof} Suppose $\psi$ is an isometric zero divisor of $\Lipk$. In particular, for each $v\in T^*$, the function $\psi p_v$ vanishes at the points where $\psi$ vanishes, thus
$$\|p_v\|_k=\left\|\frac{\psi p_v}{\psi}\right\|_k=\|\psi p_v\|_k,\,\hbox{ for each }v\in T^*.$$ More generally, $\|f\|_k=\|\psi f\|_k$ for each $f\in\mathcal{P}$.

We begin by showing that $\psi$ is bounded.
Fix $w\in T^*$ and let $$f_w=\frac1{|w|\prod_{j=0}^{k-1}\ell_j(|w|)}p_w.$$ Then $f_w\in\mathcal{P}$ and, recalling that $Dp_w=\chi_w$, for $v\in T^*$,
$$|v|\prod_{j=0}^{k-1}\ell_j(|v|)Df_w(v)=\frac{|v|\prod_{j=0}^{k-1}\ell_j(|v|)}{|w|\prod_{j=0}^{k-1}\ell_j(|w|)}\chi_w(v)=\chi_w(v).$$
Thus, $\|\psi f_w\|_k=\|f_w\|_k=1$. Therefore, letting $D_w=S_w\backslash\{w\}$,
 we deduce
\ben 1&=&\sup_{v\in S_w}|v|\prod_{j=0}^{k-1}\ell_j(|v|)|\psi(v)f_w(v)-\psi(v^-)f_w(v^-)|\nonumber\\
&=&\max\left\{|\psi(w)|,\sup_{v\in D_w}\frac{|v|\prod_{j=0}^{k-1}\ell_j(|v|)}{|w|\prod_{j=0}^{k-1}\ell_j(|w|)}D\psi(v)\right\}\ge |\psi(w)|,
\nonumber\eeqn
proving that $\psi\in L^\infty$.

Next, for $|w|\ge 2$, let us define
$$g_w(v)=\begin{cases}\ \ \ \ \  0 &\quad\hbox{ if }v=o,\\
\frac{\ell_k(|v|)}{|w|\prod_{j=0}^{k-1}\ell_j(|w|)}&\quad\hbox{ if }1\le |v|<|w|,\\
\frac{\ell_k(|w|)}{|w|\prod_{j=0}^{k-1}\ell_j(|w|)}&\quad\hbox{ if }|v|\ge |w|.\end{cases}$$
Then $g_w\in\mathcal{P}$, so that
\ben \|\psi g_w\|_k=\|g_w\|_k.\label{fact0}\eeqn Furthermore, by Lemma~\ref{noteasy},
\ben \|g_w\|_k&=&\frac{\max\left\{1,\sup\limits_{2\le |v|\le |w|}|v|\prod_{j=0}^{k-1}\ell_j(|v|)(\ell_k(|v|)-\ell_k(|v|-1))\right\}}{|w|\prod_{j=0}^{k-1}\ell_j(|w|)}\nonumber\\
&=&\frac{\max\{1,2\prod_{j=0}^{k-1}\ell_j(2)(\ell_k(2)-\ell_k(1))\}}{|w|\prod_{j=0}^{k-1}\ell_j(|w|)}.\label{fact1}\eeqn
On the other hand
\ben \|\psi g_w\|_k\hskip-5pt&\ge &\hskip-8pt\sup_{2\le |v|\le |w|}\frac{|v|\prod_{j=0}^{k-1}\ell_j(|v|)}{|w|\prod_{j=0}^{k-1}\ell_j(|w|)}|D\psi(v)\ell_k(|v|)+\psi(v^-)(\ell_k(|v|)
-\ell_k(|v|-1))|\nonumber\\&\ge &\hskip-8pt\sup_{2\le |v|\le |w|}\frac{|v|\prod_{j=0}^{k-1}\ell_j(|v|)}{|w|\prod_{j=0}^{k-1}\ell_j(|w|)}D\psi(v)\ell_k(|v|)\nonumber\\&\ &\hskip10pt-\sup_{2\le|v|\le |w|}|\psi(v^-)|\frac{|v|\prod_{j=0}^{k-1}\ell_j(|v|)(\ell_k(|v|)-\ell_k(|v|-1))}{|w|\prod_{j=0}^{k-1}\ell_j(|w|)}\nonumber\\&\ge &\hskip-8pt\sup_{2\le |v|\le |w|}\frac{|v|\prod_{j=0}^{k-1}\ell_j(|v|)}{|w|\prod_{j=0}^{k-1}\ell_j(|w|)}D\psi(v)\ell_k(|v|)\nonumber\\&\ &\hskip10pt-\|\psi\|_\infty\frac{2\prod_{j=0}^{k-1}\ell_j(2)(\ell_k(2)-\ell_k(1))}{|w|\prod_{j=0}^{k-1}\ell_j(|w|)}.\label{fact2}\eeqn
Combining (\ref{fact0}), (\ref{fact1}) and (\ref{fact2}), we obtain
$$\sup_{2\le |v|\le |w|}\frac{|v|\displaystyle\prod_{j=0}^{k-1}\ell_j(|v|)}{|w|\displaystyle\prod_{j=0}^{k-1}\ell_j(|w|)}D\psi(v)\ell_k(|v|)\le \frac{1+(1+\|\psi\|_\infty)2\displaystyle\prod_{j=0}^{k-1}\ell_j(2)(\ell_k(2)-\ell_k(1))}{|w|\displaystyle\prod_{j=0}^{k-1}\ell_j(|w|)}.$$
Multiplying both sides by $|w|\prod_{j=0}^{k-1}\ell_j(|w|)$ and letting $|w|\to \infty$, we obtain
$$\sup_{|v|\ge 2}|v|\prod_{j=0}^k\ell_j(|v|)D\psi(v)<\infty.$$ Therefore, $\psi\in \Lip^{(k+1)}$. In particular, since $\ell_k(|v|)\to \infty$ as $|v|\to\infty$, it follows that
$$\lim_{|v|\to\infty}|v|\prod_{j=0}^{k-1}\ell_j(|v|)D\psi(v)=0.$$ Therefore, $\psi\in \Lipko$. The result now follows from Corollary~\ref{no_iso_zero}.
\end{proof}

\section{Compactness}\label{Section:Compact}

We begin this section, with a standard compactness criterion, which will be needed to prove one of our main results of the paper.

\begin{lemma}\label{compact:chara} A bounded multiplication operator $M_\psi$ on $\Lipk$ (respectively, $\Lipko$) is compact if and only if for every bounded sequence $\{f_n\}$ in $\Lipk$ (respectively, $\Lipko$) converging to 0 pointwise, $\knorm{\psi f_n}\to 0$ as $n\to\infty$.\end{lemma}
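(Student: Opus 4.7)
The plan is to prove both directions by a standard diagonal-subsequence argument, writing $X$ for either $\Lipk$ or $\Lipko$ and relying on Corollary~\ref{funct_Banach} to know that the point evaluations on $X$ are bounded.

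For the forward direction, I would assume $M_\psi$ is compact on $X$ and let $\{f_n\} \subset X$ be bounded with $f_n \to 0$ pointwise on $T$. The goal is to conclude $\knorm{\psi f_n} \to 0$. I would argue that every subsequence of $\{M_\psi f_n\}$ has a further norm-convergent subsubsequence by compactness, and since norm convergence in $X$ forces pointwise convergence (via the bounded point evaluations), the limit must agree pointwise with $\psi \cdot 0 \equiv 0$. Hence $0$ is the only norm-cluster point of $\{M_\psi f_n\}$, and the whole sequence converges to $0$ in norm.

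For the backward direction, I would start with an arbitrary bounded sequence $\{f_n\} \subset X$ with $\knorm{f_n} \leq M$. Proposition~\ref{modulus_est} gives pointwise bounds $|f_n(v)| \leq M\,\ell_k(|v|)$, so the values $\{f_n(v)\}_n$ are bounded for each $v$; since $T$ is countable, a Cantor-type diagonal extraction produces a subsequence $\{f_{n_j}\}$ converging pointwise on $T$ to some function $f$. The key step is to show that $\{M_\psi f_{n_j}\}$ is Cauchy in the $\knorm{\cdot}$-norm; the completeness of $\Lipk$ (Theorem~\ref{Banach}) then delivers a limit, and in the $\Lipko$ case the limit lies automatically in $\Lipko$ since $\Lipko$ is closed in $\Lipk$ by Proposition~\ref{dense}, thereby proving compactness of $M_\psi$ on $X$.

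To establish the Cauchy property I would argue by contradiction: if $\{M_\psi f_{n_j}\}$ were not Cauchy, there would exist $\varepsilon > 0$ and indices $j_m, k_m \to \infty$ with $\knorm{\psi(f_{n_{j_m}} - f_{n_{k_m}})} \geq \varepsilon$. Setting $h_m = f_{n_{j_m}} - f_{n_{k_m}}$, the sequence $\{h_m\}$ lies in $X$, is bounded in norm by $2M$, and converges pointwise to $0$ (since both $f_{n_{j_m}}(v)$ and $f_{n_{k_m}}(v)$ converge to $f(v)$). Applying the hypothesis to $\{h_m\}$ then yields $\knorm{\psi h_m} \to 0$, contradicting $\knorm{\psi h_m} \geq \varepsilon$. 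The main obstacle is precisely the reason I form the Cauchy differences $f_{n_{j_m}} - f_{n_{k_m}}$ rather than the more tempting candidate $f_{n_j} - f$: in the $\Lipko$ case the pointwise limit $f$ need not itself belong to $\Lipko$ (a Fatou-type bound at each fixed vertex only places $f$ in $\Lipk$), so $f_{n_j} - f$ may fail to lie in the hypothesis class. Working with differences of elements of $X$ keeps everything inside $X$ and sidesteps this issue cleanly.
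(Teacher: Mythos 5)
Your proof is correct. The forward direction is essentially the paper's argument: extract a norm-convergent subsequence of $\{M_\psi f_n\}$, use the bounded point evaluations (Proposition~\ref{modulus_est}) to identify the limit pointwise as $0$, and conclude that $0$ is the only cluster point so the whole sequence converges. The backward direction, however, takes a genuinely different route. The paper extracts a pointwise limit $g$ of the bounded sequence $\{g_{n_h}\}$, verifies $g\in\Lipk$ by a Fatou-type estimate, and then applies the hypothesis to the differences $g_{n_h}-g$; it then asserts the $\Lipko$ case is ``similar.'' You instead prove directly that $\{M_\psi f_{n_j}\}$ is norm-Cauchy, arguing by contradiction with the differences $f_{n_{j_m}}-f_{n_{k_m}}$, which remain inside the hypothesis class, and then invoke completeness of $\Lipk$ (Theorem~\ref{Banach}) together with closedness of $\Lipko$ (Proposition~\ref{dense}). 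What your variant buys is precisely the point you flag: a bounded pointwise limit of $\Lipko$ functions need not lie in $\Lipko$ (e.g., the truncations of $v\mapsto\ell_k(|v|)$ are in $\Lipko$ and converge pointwise to a function in $\Lipk\setminus\Lipko$), so the paper's subtraction of $g$ does not transfer verbatim to the little space, whereas your Cauchy-difference argument handles both cases uniformly. The only bookkeeping worth making explicit is that in the negation of the Cauchy property you may choose $j_m,k_m\ge m$, so both index sequences tend to infinity and $h_m\to 0$ pointwise.
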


\begin{proof} We prove the result for the bounded operator $M_\psi$ acting on $\Lipk$. The proof for the other case is similar.

	Assume first $M_\psi$ is compact on $\Lipk$ and let $\{f_n\}$ be a bounded sequence in $\Lipk$ converging to 0 pointwise. By rescaling the sequence, if necessary, we may assume $\knorm{f_n} \le 1$ for all $n\in \N$. The compactness assumption implies that $\{f_n\}$ has a subsequence $\{f_{n_h}\}$ such that $\{\psi f_{n_h}\}$
 converges in norm to some function $f\in\Lipk$. Then $\psi(o) f_{n_h}(o)\to f(o)$ and by Proposition~\ref{modulus_est} applied to the function $\psi f_{n_h}-f$, for each $v\in T^*$, we have $|\psi(v)f_{n_h}(v)-f(v)|\le \ell_k(|v|)\knorm{\psi f_{n_h}-f}.$
Therefore, $\psi f_{n_k}\to f$ pointwise. Since by assumption, $f_n\to 0$ pointwise, the function $f$ must be identically 0, hence $\knorm{\psi f_{n_h}}\to 0$. Since $0$ is the only limit point in $\Lipk$ of the sequence $\{\psi f_n\}$, it follows that $\knorm{\psi f_n}\to 0$ as $n\to\infty$.

	Conversely, assume $\knorm{\psi f_n}\to 0$ as $n\to\infty$ for every bounded sequence $\{f_n\}$ in $\Lipk$ converging to 0 pointwise. Let $\{g_n\}$ be a sequence in $\Lipk$ with $\knorm{g_n}\le 1$. Then $|g_n(o)|\le 1$ and by Proposition~\ref{modulus_est}, $|g_n(v)|\le \ell_k(|v|)$ for each $v\in T^*$. Thus $\{g_n\}$ is uniformly bounded on finite subsets of $T$ and so some subsequence $\{g_{n_h}\}$ converges pointwise to some function $g$.

Fix $\e>0$ and $v\in T$, $|v|\ge 2$. Then $$|g_{n_h}(v)-g(v)|<\frac{\e}{2|v|\prod_{j=0}^{k-1}\ell_j(|v|)} \ \hbox{ and }\ |g_{n_h}(v^-)-g(v^-)|<\frac{\e}{2|v|\prod_{j=0}^{k-1}\ell_j(|v|)}$$ for all $h$ sufficiently large.
Thus
\ben |v|\prod_{j=0}^{k-1}\ell_j(|v|)Dg(v)&\le& |v|\prod_{j=0}^{k-1}\ell_j(|v|)|g(v)-g(v^-)-(g_{n_h}(v)-g_{n_h}(v^-))|\nonumber\\&\ &\quad +|v|\prod_{j=0}^{k-1}\ell_j(|v|)D g_{n_h}(v)<\e +\|g_{n_h}\|_k.\nonumber\eeqn
Therefore $g\in\Lipk$.
The sequence $\{f_h\}$ defined by $f_h=g_{n_h}-g$ is bounded in $\Lipk$ and converges to 0 pointwise. By the assumption, $\knorm{\psi f_{n_h}}\to 0$ as $h\to\infty$, so $\psi g_h\to\psi g$ in norm. Hence $M_\psi$ is compact.\end{proof}

	We are now ready to prove the main result of this section. Let $L_0$ denote the space of functions $f$ on $T$ vanishing at infinity (i.e. satisfying the condition $\lim_{|v|\to\infty}f(v)=0$).

\begin{theorem}~\label{chara_compactness} For a bounded multiplication operator $M_\psi$ on $\Lipk$, the following statements are equivalent.

{\rm{(a)}} $M_\psi$ is compact on $\Lipk$.

{\rm{(b)}} $M_\psi$ is compact on $\Lipko$.

{\rm{(c)}} $\psi\in L_0\cap\Lip^{(k+1)}_0$.
\end{theorem}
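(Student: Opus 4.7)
The plan is to prove the cyclic chain (c) $\Rightarrow$ (a) $\Rightarrow$ (b) $\Rightarrow$ (c). The implication (a) $\Rightarrow$ (b) is essentially free: Theorem~\ref{boundedness} shows $M_\psi$ is bounded on $\Lipk$ if and only if it is bounded on $\Lipko$, so $M_\psi$ preserves $\Lipko$; since $\Lipko$ is closed in $\Lipk$ by Proposition~\ref{dense}, the restriction of a compact operator to a closed invariant subspace is compact.

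For (c) $\Rightarrow$ (a), I would invoke Lemma~\ref{compact:chara}: let $\{f_n\}$ be bounded in $\Lipk$ and pointwise convergent to $0$, and show $\knorm{\psi f_n}\to 0$. The term $|\psi(o)f_n(o)|$ vanishes by pointwise convergence, and the product-rule estimate (\ref{prodrule}) combined with Proposition~\ref{modulus_est} gives, for $v\in T^*$,
$$|v|\prod_{j=0}^{k-1}\ell_j(|v|) D(\psi f_n)(v)\le |v|\prod_{j=0}^{k}\ell_j(|v|) D\psi(v) \knorm{f_n}+|\psi(v^-)|\,|v|\prod_{j=0}^{k-1}\ell_j(|v|) Df_n(v).$$
Given $\e>0$, use $\psi\in L_0\cap\Lip^{(k+1)}_0$ to select $N$ so that the bound for $|v|>N$ is at most $2\e\sup_n\knorm{f_n}$; the supremum over the finitely many vertices with $|v|\le N$ then tends to $0$ because both $f_n(v)$ and $f_n(v^-)$ vanish.

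The substantive step is (b) $\Rightarrow$ (c). Theorem~\ref{boundedness} already gives $\psi\in L^\infty\cap\Lip^{(k+1)}$. For $\psi\in L_0$, given any sequence $\{v_n\}\subset T^*$ with $|v_n|\to\infty$, set $f_n=p_{v_n}/(|v_n|\prod_{j=0}^{k-1}\ell_j(|v_n|))$: this lies in $\Lipko$, has $\knorm{f_n}=1$, and converges to $0$ pointwise, while evaluating the norm at $v_n$ yields $|\psi(v_n)|\le\knorm{\psi f_n}\to 0$ by Lemma~\ref{compact:chara}. For $\psi\in\Lip^{(k+1)}_0$, I would exploit the family $f_p(v)=(\ell_k(|v|))^p\chi_{T^*}(v)$, $p\in(0,1)$, introduced in the proof of Theorem~\ref{boundedness}, which lies in $\Lipko$ with $\sup_{p\in(0,1)}\knorm{f_p}<\infty$. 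For $p_n\to 1^-$, compactness of $M_\psi$ on $\Lipko$ extracts a subsequence $\{\psi f_{p_{n_h}}\}$ convergent in norm to some $F$; pointwise evaluation via Proposition~\ref{modulus_est} identifies $F(v)=\psi(v)\ell_k(|v|)$ for $v\ne o$ and $F(o)=0$. Since $\Lipko$ is closed in $\Lipk$, $F\in\Lipko$. The decomposition $\psi(v)\ell_k(|v|)-\psi(v^-)\ell_k(|v|-1)=(\psi(v)-\psi(v^-))\ell_k(|v|)+\psi(v^-)(\ell_k(|v|)-\ell_k(|v|-1))$ and the triangle inequality give
$$|v|\prod_{j=0}^{k}\ell_j(|v|)D\psi(v)\le|v|\prod_{j=0}^{k-1}\ell_j(|v|)DF(v)+|\psi(v^-)|(\f_{k,|v|}+1),$$
and both terms on the right tend to $0$ as $|v|\to\infty$, the first because $F\in\Lipko$ and the second by $\psi\in L_0$ together with $\f_{k,|v|}\to 0$ from Lemma~\ref{noteasy}.

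The main obstacle is the $\psi\in\Lip^{(k+1)}_0$ portion: direct test functions concentrated at a single $v_n$ recover only $|\psi(v_n)|$ or bare $D\psi(v_n)$ without the necessary weight $|v_n|\prod_{j=0}^k\ell_j(|v_n|)$, while test functions shaped like $\ell_k(|v|)$ carry the correct weight but fail to vanish pointwise. The detour through the $p$-parameter family circumvents this obstruction by pushing the limit $p\to 1^-$ through $M_\psi$ via compactness, producing the single auxiliary function $\psi\ell_k(|\cdot|)\in\Lipko$, from which the decay condition on $D\psi$ follows by a routine split.
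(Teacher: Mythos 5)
Your proposal is correct, and three of the four implications ((a)$\Rightarrow$(b), (c)$\Rightarrow$(a), and the $L_0$ half of (b)$\Rightarrow$(c)) follow the paper's route essentially verbatim, up to the harmless substitution of $p_{v_n}$ for $\chi_{v_n}$ in the test functions detecting $|\psi(v_n)|$. Where you genuinely diverge is in proving $\psi\in\Lip^{(k+1)}_0$ from (b). The paper stays within the sequential criterion of Lemma~\ref{compact:chara}: it builds an explicit pointwise-null, norm-bounded sequence $g_n$ whose profile $[\ell_k(|v|)]^2/\ell_k(|v_n|)$ plateaus at the value $\ell_k(|v_n|)$ on $|v|\ge|v_n|-1$, so that $D(\psi g_n)(v_n)=\ell_k(|v_n|)D\psi(v_n)$ exactly (no cross term) and the weighted decay of $D\psi$ drops out directly; the price is a somewhat fiddly verification that $\knorm{g_n}$ is bounded. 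You instead recycle the family $f_p=(\ell_k(|\cdot|))^p$ from the proof of Theorem~\ref{boundedness}, use compactness in its raw form to extract a norm-convergent subsequence of $\psi f_{p_n}$ as $p_n\to 1^-$, identify the limit as $F=\psi\,\ell_k(|\cdot|)\in\Lipko$ via Proposition~\ref{modulus_est}, and then read off the decay of $|v|\prod_{j=0}^{k}\ell_j(|v|)D\psi(v)$ from the decomposition $DF(v)$ versus $|\psi(v^-)|(\f_{k,|v|}+1)$, invoking Lemma~\ref{noteasy} and the already-established $\psi\in L_0$. Your route is slicker and produces the clean intermediate fact $\psi\,\ell_k(|\cdot|)\in\Lipko$, at the cost of needing the full subsequence-extraction form of compactness (rather than only its action on pointwise-null sequences) and of making the derivative condition logically dependent on first proving $\psi\in L_0$; the paper's construction needs neither, but is computationally heavier. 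Both are complete proofs.
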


\begin{proof} (a)$\Longrightarrow$(b) is obvious.

(b)$\Longrightarrow$(c):  Assume $M_\psi$ is compact on $\Lipko$. To prove (c) it suffices to show that if $\{v_n\}$ be a sequence in $T$ such that $2< |v_n|\to\infty$, then \ben \displaystyle\lim_{n\to\infty}\psi(v_n)=0\ \hbox{ and}\label{f1}\eeqn \ben \lim\limits_{n\to \infty}|v_n|\prod_{j=0}^{k}\ell(|v_n|)D\psi(v_n)=0.\label{f2}\eeqn
Corresponding to each integer $n>2$, define
$\displaystyle f_n=\frac1{|v_n|\prod_{j=0}^{k-1}\ell_j(|v_n|)}\c_{v_n}.$ Then $f_n\in\Lipko$, $f_n\to 0$ pointwise, and an easy inductive argument shows that
$$\knorm{f_n}=\frac{|v_n|+1}{|v_n|}\prod_{j=0}^{k-1}\frac{\ell_j(|v_n|+1)}{\ell_j(|v_n|)}\le 2^k.$$ Using Lemma~\ref{compact:chara}, we obtain $|\psi(v_n)|\le \knorm{\psi f_n}\to 0$ as $n\to\infty$, proving (\ref{f1}).

Next, for $n\in\N$, let $$g_n(v)=\begin{cases} 0& \quad\hbox{ if }|v|=0,1\\
\frac{[\ell_k(|v|)]^2}{\ell_k(|v_n|)} & \quad\hbox{ if }2\le |v|<|v_n|-1,\\
\ell_k(|v_n|) & \quad\hbox{ if }|v|\ge |v_n|-1.\end{cases}$$
Then $g_n\in\Lipko$, $g_n(o)=0$, $\{g_n\}$ converges to 0 pointwise, and for $v\in T^*$
$$Dg_n(v)=\begin{cases} 0& \quad\hbox{ if }|v|=1 \hbox{ or }|v|>|v_n|-1,\\
\frac{[\ell_k(2)]^2}{\ell_k(|v_n|)} & \quad\hbox{ if }|v|=2,\\
\frac{[\ell_k(|v|)]^2-[\ell_k(|v|-1)]^2}{\ell_k(|v_n|)} & \quad\hbox{ if }2<|v|<|v_n|-1,\\
\frac{[\ell_k(|v_n|)]^2-[\ell_k(|v_n|-2)]^2}{\ell_k(|v_n|)} & \quad\hbox{ if }|v|=|v_n|-1.\end{cases}$$
Using the inequality $\ell_k(|v|)+\ell_k(|v|-1)\le 2\ell_k(|v_n|)$ for $2<|v|<|v_n|$, noting that   $$[\ell_k(|v_n|)]^2-[\ell_k(|v_n|-2)]^2\le 2\ell_k(|v_n|)(\ell_k(|v_n|)-\ell_k(|v_n|-1)+\ell_k(|v_n|-1)-\ell_k(|v_2|-2),$$ and applying Lemma~\ref{noteasy}, we obtain
$$|v|\prod_{j=0}^{k-1}\ell_j(|v|)Dg_n(v)\le \max\left\{2\prod_{j=0}^k\ell_j(2),12
\prod_{j=0}^{k-1}\ell_j(3)(\ell_k(3)-\ell_k(2))\right\}.$$ Thus, $\{\knorm{g_n}\}$ is bounded. By Lemma~\ref{compact:chara}, we obtain $$|v_n|\prod_{j=0}^{k}\ell_j(|v_n|)D\psi(v_n)\le \knorm{\psi g_n}\to 0$$ as $n\to\infty$, proving (\ref{f2}).

(c)$\Longrightarrow$(a): Set aside the case when $\psi$ is the constant 0. By Lemma~\ref{compact:chara}, to prove that $M_\psi$ is compact on $\Lipk$, it suffices to show that if $\{f_n\}$ is a sequence in $\Lipk$ converging to 0 pointwise and such that $a=\displaystyle\sup_{n\in\N}\knorm{f_n}<\infty$, then $\knorm{\psi f_n}\to 0$ as $n\to\infty$. Let $\{f_n\}$ be such a sequence and fix $\e>0$. Then $|f_n(o)|<\displaystyle\frac{\e}{3\knorm{\psi}}$ for all $n$ sufficiently large, and there exists $M\in \N$ such that for $|v|\ge M$, $|\psi(v)|<\e/(3a)$, and $|v|\prod_{j=0}^{k}\ell_j(|v|)D\psi(v)<\e/(3a).$
If $|v|> M$, then $|v^-|\ge M$, so $|\psi(v^-)|<\e/(3a)$. Thus, by (\ref{prodrule}) and Proposition~\ref{modulus_est}, we obtain
$$ |v|\prod_{j=0}^{k-1}\ell_j(|v|)D(\psi f_n)(v) \le |v|\prod_{j=0}^{k}\ell_j(|v|)D\psi(v)\knorm{f_n}+\frac{\e}3.$$
Since $f_n\to 0$ uniformly on $\{v\in T: |v|\le M\}$, so does the sequence $\{w\mapsto |w|\prod_{j=0}^{k-1}\ell_j(|w|)D(\psi f_n)(w)\}$. Thus, for $n$ sufficiently large and for each $v\in T^*$, $|v|\prod_{j=0}^{k-1}\ell_j(|v|)|D(\psi f_n)(v)<\frac23\e$. Therefore $\knorm{\psi f_n}<\e$ for all $n$ sufficiently large, whence $\knorm{\psi f_n}\to 0$, as $n\to\infty$.\end{proof}

\section{Essential norm}\label{ess_norm}

Recall that the \emph{essential norm} of a bounded operator $S$ on a Banach space $X$ is defined as $$\|S\|_e=\inf\{\|S-K\|: K \hbox{ compact operator on }X\}.$$
\vskip3pt

 Given a bounded multiplication operator $M_\psi$ on $\Lipk$ or $\Lipko$, define
\ben A(\psi)=\lim_{n\to\infty}\sup_{|v|\ge\, n}|\psi(v)|\,\hbox{ and }\,
B(\psi)=\lim_{n\to\infty}\sup_{|v|\ge\, n}|v|\prod_{j=0}^{k}\ell_j(|v|)D\psi(v).\nonumber\eeqn

\begin{theorem}\label{estimate} Let $M_\psi$ be bounded on \rm{$\Lipk$} or \rm{$\Lipko$}. Then
$$\max\left\{A(\psi),B(\psi)\right\}\le \|M_\psi\|_e\le A(\psi)+B(\psi).$$
\end{theorem}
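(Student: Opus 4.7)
The plan is to prove the inequalities $\|M_\psi\|_e\le A(\psi)+B(\psi)$ and $\|M_\psi\|_e\ge\max\{A(\psi),B(\psi)\}$ separately: the upper bound by exhibiting explicit finite-rank approximations of $M_\psi$, and the lower bound by testing $M_\psi$ against two sequences in $\Lipko$ that are weak-null but have controllable image norm.

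For the upper estimate, for each $n\in\N$ and each $v\in T$ with $|v|>n$ I write $v_{(n)}$ for the ancestor of $v$ of length $n$, and define $K_nf(v)=\psi(v)f(v)$ if $|v|\le n$ and $K_nf(v)=\psi(v_{(n)})f(v_{(n)})$ if $|v|>n$. Since $K_nf$ is determined by the finitely many numbers $\{\psi(v)f(v):|v|\le n\}$, $K_n$ has finite-dimensional range and is compact on both $\Lipk$ and $\Lipko$. A case check gives $D((M_\psi-K_n)f)(v)=0$ for $|v|\le n$ and $D((M_\psi-K_n)f)(v)=D(\psi f)(v)$ for $|v|>n$. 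Applying the product-rule inequality $D(\psi f)(v)\le D\psi(v)|f(v)|+|\psi(v^-)|Df(v)$ together with Proposition~\ref{modulus_est} yields
$$\|M_\psi-K_n\|\le \sup_{|v|>n}|\psi(v^-)|+\sup_{|v|>n}|v|\prod_{j=0}^{k}\ell_j(|v|)D\psi(v),$$
whose right-hand side tends to $A(\psi)+B(\psi)$ as $n\to\infty$.

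For the lower estimate I exploit the following mechanism: any bounded sequence $\{h_n\}\subset\Lipko$ converging to $0$ pointwise is weak-null in $\Lipko$ by Proposition~\ref{weakconv}, and hence also weak-null in $\Lipk$, because every functional on $\Lipk$ restricts to one on $\Lipko$. Consequently $\|Kh_n\|_k\to 0$ for every compact $K$, and $\|M_\psi-K\|\ge \limsup_n \|M_\psi h_n\|_k/\knorm{h_n}$. For $\|M_\psi\|_e\ge A(\psi)$, I pick $v_n$ with $|v_n|\to\infty$ and $|\psi(v_n)|\to A(\psi)$ and take $h_n=\bigl(|v_n|\prod_{j=0}^{k-1}\ell_j(|v_n|)\bigr)^{-1}\chi_{v_n}$, which already appeared in Theorem~\ref{chara_compactness}; Lemma~\ref{easy} yields $\knorm{h_n}\to 1$ and $\knorm{M_\psi h_n}\to A(\psi)$. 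For $\|M_\psi\|_e\ge B(\psi)$, I choose $w_n$ with $|w_n|\prod_{j=0}^{k}\ell_j(|w_n|)D\psi(w_n)\to B(\psi)$, set $v_n=w_n^-$, and select integers $m_n\to\infty$ slowly enough that $\ell_k(m_n)/\ell_k(|w_n|)\to 0$ (for instance $m_n$ minimal with $\ell_k(m_n)\ge\sqrt{\ell_k(|w_n|)}$). Writing $v\wedge v_n$ for the deepest common ancestor, set
$$h_n(v)=\max\bigl(0,\,\ell_k(|v\wedge v_n|)-\ell_k(m_n)\bigr).$$
A case analysis shows that $Dh_n$ is supported on the portion of the path from $o$ to $v_n$ of length exceeding $m_n$, with $|u_j|\prod_{j'=0}^{k-1}\ell_{j'}(j)Dh_n(u_j)=1+\phi_{k,j}$ there; hence $\knorm{h_n}=1+\phi_{k,m_n+1}\to 1$ by Lemma~\ref{noteasy}. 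Moreover $h_n(v)\to 0$ for each fixed $v$ since eventually $|v\wedge v_n|\le m_n$. Crucially, $w_n\wedge v_n=v_n$, so $h_n(w_n)=h_n(v_n)$, $Dh_n(w_n)=0$, and
$$|w_n|\prod_{j=0}^{k-1}\ell_j(|w_n|)D(\psi h_n)(w_n)=\frac{\ell_k(|w_n|-1)-\ell_k(m_n)}{\ell_k(|w_n|)}\,|w_n|\prod_{j=0}^{k}\ell_j(|w_n|)D\psi(w_n),$$
whose first factor tends to $1$ by Lemma~\ref{easy} and the choice of $m_n$, while the second tends to $B(\psi)$.

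The main obstacle is the design of $h_n$ in the $B(\psi)$ step. The analogous test function from Theorem~\ref{chara_compactness} has $\knorm{h_n}$ bounded but bounded away from $1$, producing only $\|M_\psi\|_e\ge B(\psi)/C$; and attempting to detect $B(\psi)$ through $D(\psi h_n)(v_n)$ directly introduces an unremovable term of order $|\psi(v_n)|\le A(\psi)$ coming from $Dh_n(v_n)\neq 0$. Both defects are resolved at once by (i) evaluating $D(\psi h_n)$ at the child $w_n$, where the plateau of $h_n$ forces $Dh_n(w_n)=0$ and kills the cross term, and (ii) introducing the slowly growing base level $m_n$, which pushes the supremum defining $\knorm{h_n}$ into the decreasing tail $\phi_{k,j}\to 0$ while preserving $\ell_k(m_n)/\ell_k(|w_n|)\to 0$.
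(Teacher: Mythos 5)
Your proposal is correct, and its overall architecture coincides with the paper's: both lower bounds are obtained by testing against bounded, pointwise-null (hence, via Proposition~\ref{weakconv}, weak-null) sequences in $\Lipko$, and the upper bound comes from subtracting a compact truncation-to-level-$n$ operator and estimating the remainder with the product rule and Proposition~\ref{modulus_est}. The differences are local. For the upper bound you use $K_n^{\psi}f=$ (truncation of $\psi f$) where the paper uses $M_\psi\circ(\text{truncation})$; these give the identical remainder estimate. For $A(\psi)$ you use a normalized single-vertex indicator rather than the paper's normalized indicator of the whole sphere $\{|v|=n\}$; again equivalent. The genuine divergence is in the $B(\psi)$ lower bound. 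The paper uses the radial profile $h_n(v)=(\ell_k(|v|+1))^{p+1}/(\ell_k(|v_n|))^p$ capped at $\ell_k(|v_n|)$, whose norm tends to $1+p$, obtains $\|M_\psi\|_e\ge B(\psi)/(1+p)$, and must then let $p\to 0$. Your profile $h_n(v)=\max(0,\ell_k(|v\wedge v_n|)-\ell_k(m_n))$, supported along a single geodesic with a slowly growing cutoff $m_n$, has $\knorm{h_n}=1+\varphi_{k,m_n+1}\to 1$ exactly (by the monotonicity in Lemma~\ref{noteasy}), so the factor $\bigl(\ell_k(|w_n|-1)-\ell_k(m_n)\bigr)/\ell_k(|w_n|)\to 1$ delivers $B(\psi)$ in one pass, with no auxiliary parameter. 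Both constructions rely on the same essential idea — a plateau of $h_n$ across the edge $[v_n,w_n]$ so that $Dh_n(w_n)=0$ and the cross term $|\psi|\,Dh_n$ vanishes at the test edge — but yours trades the paper's power interpolation for a sharper norm computation. Two small points worth making explicit if you write this up: dispose of the trivial case $B(\psi)=0$ before choosing $w_n$, and note (as you do) that weak nullity in $\Lipko$ passes to $\Lipk$ by restriction of functionals, which is needed when $K$ is merely compact on $\Lipk$; the paper glosses over this.
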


\begin{proof} We begin with the proof of the lower estimate. For each $n\in\N$ and $v\in T$, define
$f_n=\frac1{d_n}\c_{\{v:\,|v|=n\}},$ where $d_n=n\prod_{j=0}^{k-1}\ell_j(n).$ Then $f_n\in\Lipko$, $\|f_n\|_k =\displaystyle\frac{(n+1)\prod_{j=0}^{k-1}\ell_j(n+1)}{n\prod_{j=0}^{k-1}\ell_j(n)}\le 2^k$, and $f_n\to 0$ pointwise. Therefore, by Proposition~\ref{weakconv}, the sequence $\{f_n\}$ converges weakly to 0 in $\Lipko$ and hence  $\displaystyle\lim\limits_{n\to\infty}\|Kf_n\|_k=0$ for any compact operator $K$ on $\Lipko$. Consequently,
$$\|M_\psi-K\|\ge \limsup_{n\to\infty}\|M_\psi f_n\|_k.$$
By Lemma~\ref{easy}, we deduce that
\ben \|M_\psi\|_e&\ge & \limsup_{n\to\infty} \|M_\psi f_n\|_k\nonumber\\
&=& \limsup_{n\to\infty} \sup_{v\in T^*}|v|\prod_{j=0}^{k-1}\ell_j(|v|)|\psi(v)f_n(v)-\psi(v^-)f_n(v^-)|\nonumber\\
&= &\lim_{n\to\infty} \frac{n+1}{n}\prod_{j=0}^{k-1}\frac{\ell_j(n+1)}{\ell_j(n)}\sup_{|v|\ge n}|\psi(v)|=A(\psi).\nonumber\eeqn

 We next show that $\|M_\psi\|_e\ge B(\psi).$
The result is immediate if $B(\psi)=0$. So assume $\{v_n\}$ is a sequence of vertices of length greater than 1 such that $|v_n|$ is increasing unboundedly and $$\lim_{n\to\infty}|v_n|\prod_{j=0}^k\ell_j(|v_n|)D\psi(v_n)=B(\psi).$$
Let $p$ be a fixed number in $(0,1)$. For each $n\in \N$, define
$$ h_n(v)=\begin{cases}0 &\hbox{ if }\quad |v|=0,\\
\displaystyle\frac{(\ell_k(|v|+1))^{p+1}}{(\ell_k(|v_n|))^p} &\hbox{ if }\quad 1\le |v|< |v_n|,\\
\ell_k(|v_n|)  &\hbox{ if }\quad |v|\ge |v_n|.\end{cases}$$
Then $h_n(o)=0$, $h_n(v_n)=h_n(v_n^-)=\ell_k(|v_n|)$, and
$$|v|\prod_{j=0}^{k-1}\ell_j(|v|)D h_n(v)=\hskip-3pt\begin{cases} \frac{(\ell_k(2))^{p+1}}{(\ell_k(|v_n|))^p}&\hbox{ if }\ |v|=1,\\
|v|\prod_{j=0}^{k-1}\ell_j(|v|)\frac{(\ell_k(|v|+1))^{p+1}-(\ell_k(|v|))^{p+1}}{(\ell_k(|v_n|))^p} &\hbox{ if }\ 2\le |v|< |v_n|,\\
\ \ 0 &\hbox{ if }  \ |v|\ge |v_n|.\end{cases}$$
For $2\le |v|<|v_n|-1$, calling $\g(|v|)$ the left-hand side of the above display, we see that
$$\frac{\g(|v|+1)}{\g(|v|)}=\frac{|v|+1}{|v|}\prod_{j=0}^{k-1}\frac{\ell_j(|v|+1)}{\ell_j(|v|)}\frac{\left(\frac{\ell_k(|v|+1)}{\ell_k(|v|)}\right)^{p+1}-1}{1-\left(\frac{\ell_k(|v|-1)}{\ell_k(|v|)}\right)^{p+1}}>0.$$ Thus, for $n$ sufficiently large, the supremum of $|v|\prod_{j=0}^{k-1}\ell_j(|v|)D h_n(v)$ is attained at the vertices of length $|v_n|-1$. Therefore, for $n$ sufficiently large, we have
\ben \|h_n\|_k=(|v_n|-1)\prod_{j=0}^{k-1}\ell_j(|v_n|-1)\frac{(\ell_k(|v_n|))^{p+1}-(\ell_k(|v_n|-1))^{p+1}}{(\ell_k(|v_n|))^p}.\nonumber\eeqn
Using Lemmas~\ref{easy} and \ref{noteasy}, we obtain
\ben \lim_{n\to\infty}\|h_n\|_k&=&\lim_{n\to\infty}|v_n|\prod_{j=0}^{k-1}\ell_j(|v_n|)(\ell_k(|v_n|)-\ell_k(|v_n|-1))\left(\frac{|v_n|-1}{|v_n|}\right)\nonumber\\
&\ &\hskip15pt \times \prod_{j=0}^{k-1}\frac{\ell_j(|v_n|-1)}{\ell_j(|v_n|)}\frac{(\ell_k(|v_n|))^{p+1}-(\ell_k(|v_n|-1))^{p+1}}{(\ell_k(|v_n|)-\ell_k(|v_n|-1))(\ell_k(|v_n|))^p},\nonumber\\
&=&\lim_{n\to\infty}\frac{1-u_n^{p+1}}{1-u_n}
=1+p,\nonumber\eeqn
where $u_n=\ell_k(|v_n|-1)/\ell_k(|v_n|)$.
 Then the function $g_n=\displaystyle\frac{h_n}{\|h_n\|_k}$ is in $\Lipko$, $\|g_n\|_k=1$, and $g_n\to 0$ pointwise. By Proposition~\ref{weakconv}, it follows that $\{g_n\}$ converges to $0$ weakly. Thus $\|Kg_n\|_k\to 0$ as $n\to\infty$ for any compact operator $K$ on $\Lipko$. Hence
$$\|M_\psi-K\|\ge \limsup_{n\to\infty}\|(M_\psi-K)g_n\|_k\ge \limsup_{n\to\infty}\|\psi g_n\|_k.$$ Taking the infimum over all such operators $K$, we obtain
\ben \|M_\psi\|_e&\ge &\limsup_{n\to\infty}|v_n|\prod_{j=0}^{k-1}\ell_j(|v_n|)|\psi(v_n)g_n(v_n)-\psi(v_n^-)g_n(v_n^-)|\nonumber\\
&=&\lim_{n\to\infty}|v_n|\prod_{j=0}^{k-1}\ell_j(|v_n)|)\frac{\ell_k(|v_n|)}{\|h_n\|_k}D\psi(v_n)%\nonumber\\
=\frac1{1+p}B(\psi).\nonumber\eeqn
Passing to the limit as $p\to 0$, we obtain $\|M_\psi\|_e\ge B(\psi).$

To prove the upper estimate, fix $n\in\N$, and define the operator $K_n$ on $\Lipk$ by
$$K_nf(v)=\begin{cases} f(v) &\hbox{ if }\quad |v|\le n,\\
f(v_n) &\hbox{ if }\quad |v|> n,\end{cases}$$
where $f\in\Lipk$ and $v_n$ is the ancestor of $v$ of length $n$. In particular, \ben K_nf(o)=f(o), \label{zerocondition}\eeqn
$K_nf$ attains finitely many values and hence is in $\Lipko$.

If $\{g_m\}$ is a sequence in $\Lipk$ with $\|g_m\|_k\le 1$ for each $m\in\N$, then, $a=\displaystyle\sup_{m\in\N}|g_m(o)|\le 1$ so that $|K_ng_m(o)|\le a$. Furthermore, by Proposition~\ref{modulus_est}, $|K_ng_m(v)|\le \ell_k(n)$ for each $v\in T^*$, and for each $m\in\N$. Therefore, some subsequence $\{K_ng_{m_i}\}_{i\in\N}$ must converge to a function $g$ on $T$ attaining constant values on the sectors determined by the vertices on the sphere centered at $o$ of radius $n$. In particular, $g\in\Lipk$ and, since $K_ng_{m_i}\to g$ uniformly on the closed ball centered at $o$ of radius $n$, and $DK_ng_{m_i}$ and $Dg$ are 0 outside of the ball, we get
\ben &\ &\|K_ng_{m_i}-g\|_k=|g_{m_i}(o)-g(o)|+\sup_{|v|\le n}|v|\prod_{j=0}^{k-1}\ell_j(|v|)D(g_{m_i}-g)(v)\nonumber\\
&\,&\hskip12pt \le |g_{m_i}(o)-g(o)|
%&\ &
+n\prod_{j=0}^{k-1}\ell_j(n)\sup_{|v|\le n}[|g_{m_i}(v)-g(v)|+\ |g_{m_i}(v^-)-g(v^-)|],\nonumber\eeqn
which converges to 0 as $i\to\infty$. This shows that $K_n$ is compact. Then the operator $M_\psi K_n$ is also compact.
For $v\in T^*$, we have
\ben |v|\prod_{j=0}^{k-1}\ell_j(|v|)D[(I-K_n)f](v)\le |v|\prod_{j=0}^{k-1}\ell_j(|v|)Df(v) \le \|f\|_k.\label{goodest}\eeqn
 Furthermore, by Proposition~\ref{modulus_est} and (\ref{goodest}), we see that
\ben |[(I-K_n)f](v)|\le \ell_k(|v|)\|(I-K_n)f\|_k\le \ell_k(|v|)\|f\|_k.\label{best}\eeqn
From (\ref{best}) and again from (\ref{goodest}), we obtain the estimate
\small \ben \|\psi(I-K_n)f\|_k&= &\sup_{|v|>n}|v|\prod_{j=0}^{k-1}\ell_j(|v|)|\psi(v)[(I-K_n)f](v)-\psi(v^-)[(I-K_n)f](v^-)|\nonumber\\
 &\le & \sup_{|v|>n}\{|\psi(v^-)||v|\prod_{j=0}^{k-1}\ell_j(|v|)D[(I-K_n)f](v) \nonumber\\
&\ & +\ |v|\prod_{j=0}^{k-1}\ell_j(|v|)D\psi(v)|[(I-K_n)f](v)|\}\nonumber\\
&\le &\sup_{|v|>n}|\psi(v^-)|\|f\|_k+\sup_{|v|>n}|v|\prod_{j=0}^{k}\ell_j(|v|)D\psi(v)\|f\|_k.\nonumber\eeqn
\normalsize
Consequently,
\ben \|M_\psi\|_e\le \limsup_{n\to\infty}\|M_\psi-M_\psi K_n\|
=\limsup_{n\to\infty}\sup_{\|f\|_k = 1}\|\psi(I-K_n)f\|_k
\le  A(\psi)+B(\psi),\nonumber\eeqn
completing the proof.\end{proof}

\section*{Acknowledgements}
The research of the first author is supported by a grant from the College of Science and Health of the University of Wisconsin-La Crosse.

%%%%%%%%%%%%%%%%%%%%%%%%%%%%%%%%%%%%%%%%%%%%%%%%%%%%%%%%%%%%%%%%%%%%%%%%%%%%%%%%%%%%%%%%%%%%%%%
\bibliographystyle{amsplain}
\bibliography{references.bib}
\end{document}